\newtheorem{theorem}{Theorem}[section] 
\newtheorem{lemma}[theorem]{Lemma} 
\newtheorem{prop}[theorem]{Proposition} 
\theoremstyle{definition}
\newtheorem{remark}[theorem]{Remark} 
\providecommand{\Ric}{\mathop{\rm Ric}\nolimits}
\begin{document} 
	
	\title[MCF of graphs in GRW spacetimes]{Mean curvature flow of graphs in Generalized Robertson-Walker spacetimes with perpendicular Neumann boundary condition
	}
	
	
	\author{Jorge H. S. de Lira and Fernanda Roing 
	} 
	
	
	{Departamento de Matem\'atica, Universidade Federal do Cear\'a \\CEP 60455-900 
		Brazil \\ 
		
	} 
	
	\date{Received: date / Accepted: date} 
	\begin{abstract} 
		We prove the longtime existence for the mean curvature flow problem with a perpendicular Neumann boundary condition in a Generalized Robertson-Walker (GRW) spacetime that obeys the null convergence condition. In addition, we prove that the metric of such a solution is conformal to the one of the leaf of the GRW in asymptotic time. Furthermore, if the initial hypersurface is mean convex, then the evolving hypersurfaces remain mean convex during the flow.
	\end{abstract} 
	
	\maketitle 
	\section{Introduction} 
	The mean curvature flow (MCF) has been extensively studied by many authors in Riemannian ambients. G. Huisken \cite{H:84} proved that any compact convex hypersurface in $\mathbb{R}^n$ contracts to a  point as the flow decreases area. In contrast, when a hypersurface in a Lorentzian ambient moves by the MCF, it increases area and does not degenerate into a singularity. K. Ecker and G. Huisken were one of the pioneers in the study of the MCF in Lorentzian ambients. In \cite{EH:91} they constructed spacelike slices of prescribed mean curvature making use of the MCF. Later, K. Ecker dealt with the MCF of spacelike hypersurfaces in asymptotically flat spacetimes in \cite{E:93}. In the sequence, he investigated in \cite {E:97} longtime solutions for the MCF of noncompact spacelike hypersurfaces in Minkowski space. In this same work, he also studied the flow generated by a spacelike graph with Dirichlet boundary condition, and showed that it exists for all time and converges to a maximal hypersurface. In \cite{L:14} B. Lambert dealt with the MCF in the Minkowski space with a perpendicular Neumann boundary condition. He proved the longtime existence of the flow and that it converges to a homotetically expanding hyperbolic solution (a soliton in the Minkowski space). 
	
	Our aim here is to investigate the mean curvature flow of spacelike graphs in the so-called Generalized Robertson-Walker spacetimes (GRW), a special class of Lorentzian manifolds endowed with a closed conformal timelike vector field. Those special solutions of Einstein's field equations have been studied from both mathematical and physical points of view (see, e.g., \cite{ARS:95}, \cite{ARS:97}, \cite{M:73} and \cite{O'N:83}). In order to define them, we consider a connected oriented Riemannian manifold $(M^n, \sigma)$, an open interval $I=(s_-, s_+)$ of the extended real line $\mathbb{R}\cup \{-\infty, +\infty\}$ and a positive smooth function $\rho :I\to \mathbb{R}$. The Generalized Robertson-Walker spacetime modelled upon these data is the Lorentzian product manifold $N^{n+1}=I\times M^n$ endowed with the metric represented by 
	\begin{equation}
	\label{warped-m}
	\langle \cdot , \cdot \rangle = - \pi_\mathbb{R}^ * {\rm d}s^2 + \rho^2(\pi _\mathbb{R}) \pi _M^ * \sigma, 
	\end{equation}
	where $s$ is the natural coordinate in $I\subset \mathbb{R}$ and $\pi _ \mathbb{R}:N\to \mathbb{R}$ and $\pi _ M :N\to M $ denote the canonical projections. Hence, GRW is a \emph{warped} product space that we indicate by $-I \times_\rho M^n$. 
	Some well known spaces that are examples of GRW spacetimes are the {\it Minkowski space} (for $M^n =\mathbb{H}^n$ and $\rho(s) =s$ or $M^n = \mathbb{R}^n$ and $\rho(s)=1$), the {\it deSitter space} (for $M^n = \mathbb{S}^n$ and $\rho(s) = \cosh s$), the {\it anti deSitter space} (for $M^n = \mathbb{H}^n$ and $\rho(s) = \cosh s$), the {\it steady state spacetime}, (for $M^n = \mathbb{R}^n$ and $\rho(s) = e^s$) and {\em Einstein deSitter space} (for $I=(0, \infty)$, $M^ 3=\mathbb{R}^ 3$ and $\rho (s) =s^ {2/3}$) among others. 
	
	Along this paper, we will assume a natural energy condition on the GRW spacetime, the so-called {\em null convergence condition} (NCC). Namely, we say that the NCC holds on $N = -I\times _ \rho M$ provided its Ricci tensor satisfies 
	$$\overline{\Ric }(Y, Y)\geq 0$$ 
	for every light-like vector field $Y$ ($\langle Y, Y\rangle =0$).
	This constraint on the Ricci tensor can be physically interpreted as a necessary condition in order to $N$ obeys the Einstein's fields equations.
	
	Taking [\ref{O'N:83}, Proposition 7.35] into account, we have that the vector field $X=\rho(s) \frac{\partial}{\partial s}$ satisfies
	\begin{eqnarray}\label{conformal} 
	\bar{ \nabla }_Y X=\rho '(s)Y, 
	\end{eqnarray} 
	for every vector field  $Y\in \Gamma(N)$, where $\bar \nabla$ denotes the Levi-Civit\`a connection of $N$ and $'$ indicates the derivative with respect to $s$. It means that $X$ is a closed conformal vector field with conformal factor $\rho '$ (see \cite{M:99}).

	In what follows, we consider the mean curvature flow of graphs in $N $.
	Given $T>0$ and a compact convex domain $\bar \Omega \subset M$  with smooth boundary, let $u: \bar\Omega \times [0, T)\to \mathbb{R}$ be a smooth function. This defines a one-parameter  family of submanifolds $\Sigma_t$, $t\in [0,T)$, parametrized as graphs  by  the isometric immersions
	\begin{eqnarray}\label{m} 
	\varphi(x,t) = \varphi _t(x)=
	(u(x, t), x) \in I\times M, \quad t\in [0, T), \,\, x\in \bar\Omega.
	\end{eqnarray} 
	These graphs evolve by the mean curvature flow if the map $\varphi: [0,T) \times \bar\Omega \to N$ satisfies
	\begin{equation}
	\label{mcf-1}
	\frac{\partial\varphi}{\partial t} = H\nu
	\end{equation}
	where $\nu= \nu(x,t)$ is the timelike unit future direct normal to $\Sigma_t$ at the point $\varphi_t(x)$ and
	\[
	H = H(x,t) = \operatorname{div} \nu (x,t)
	\]
	is the (scalar) mean curvature of $\Sigma_t$ at that point. Now we define both initial and boundary conditions for this flow. For that, we consider the
	{\it conformal cylinder} over the boundary $\partial \bar \Omega $  defined by
	\begin{eqnarray*}\label{cylinder} 
		K= \partial\bar \Omega \times I \subset N. 
	\end{eqnarray*} 
	Let $\mu $ be the normal to $K$ chosen in a away that its second fundamental form is non-negative definite. Then we impose the following \emph{Neumann boundary condition} to the mean curvature flow (\ref{mcf-1})
	\begin{equation}
	\label{neumann}
	\langle \nu, \mu\rangle = 0 \quad \mbox{ on } \partial\bar \Omega \times [0, T)
	\end{equation}
	This means that, for each $t\in [0, T)$,  the graph $\Sigma_t$ is perpendicular to $K$ along the boundary $\partial\Sigma_t \subset K$. The initial condition of (\ref{mcf-1}) is given by the graph $\Sigma_0$ of the function $u_0:=u(\cdot\,, 0)$. In sum, we have posed an initial value problem with orthogonal Neumann condition for the mean curvature flow (\ref{mcf-1}) that can be read in terms of the time-dependent function $u$ as follows
	\begin{eqnarray}\label{Fn} 
	\begin{cases}\label{F2} 
	\frac{\partial u}{\partial t}(x, t)= \frac{W}{\rho} \operatorname{div} \frac{Du}{\rho W} + \frac{\rho'}{\rho} \Big( n+\frac{|Du|^2}{\rho^2}\Big) \,\, \mbox{ in }\,\, \bar \Omega \times [0, T) & \\ 
	u (\cdot\,, 0)=u_0, & \\ 
	\langle \nu, \mu\rangle=0 \,\, \mbox{ on }\,\, \partial \bar \Omega \times [0, T) 
	\end{cases} 
	\end{eqnarray} 
	where $Du$ denotes the gradient of the function $u_t:=u(\cdot,\, t)$ and $W=\sqrt{\rho ^2 - |Du|^2}$.

	Now, we are in position to enunciate our result:
	
	\begin{theorem}\label{theo2} 
		Let $-I\times _ \rho M ^n$ be a GRW spacetime that obeys the null convergence condition and whose warping function satisfies 
		\begin{equation}
		\label{max-princ}
		\rho'(s) \ge 0 \,\, \mbox{ and } \,\, \Big(\frac{\rho'(s)}{\rho(s)}\Big) '  \le 0 \,\, \mbox{with } \,\, \Big(\frac{\rho'(\tilde{s})}{\rho(\tilde{s})}\Big) '< 0 \,\, \mbox{for some} \,\, \tilde{s}>\max _{\Sigma _0}s
		\end{equation}
		and
		\begin{equation}
		\label{HM}
		C_- :=\limsup_{s\to s_-} \frac{\rho' (s)}{\rho(s)} <+\infty \quad \mbox{ and } \quad  C_+ :=\liminf_{s\to s_+} \frac{\rho' (s)}{\rho(s)} >0.
		\end{equation}
		Let $\bar \Omega \subset M^n$ be a compact convex domain, and $K$ its respective conformal cylinder. If $\Sigma _0^n$ is a spacelike graph over $\Omega $ that intersects $K$ orthogonally, then there exist a longtime solution to the problem \eqref{Fn} whose metric is conformal to the one of the leaf in asymptotic time. In addition, if $\Sigma _0^n$ is mean convex (possesses positive mean curvature) and $\overline{Ric }(\partial s, \partial s)$ is bounded from above, then the solution remains mean convex for all times.
	\end{theorem}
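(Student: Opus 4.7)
My plan is to follow the classical scheme for graphical MCF: short-time existence, uniform $C^{2,\alpha}$ bounds, continuation to longtime existence, then asymptotic and qualitative analysis. Short-time existence of \eqref{Fn} is routine since the equation is quasilinear and uniformly parabolic while $|Du|<\rho$, and $\langle\nu,\mu\rangle=0$ is an oblique derivative condition on $u$; classical Lieberman-type theory supplies a smooth maximal solution on $\bar\Omega\times[0,T^*)$, and the real task is to show $T^*=+\infty$ with uniform bounds independent of $T^*$.

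The height bound I would obtain via comparison with translating slice solutions of the ODE $c'(t)=n\rho'(c)/\rho(c)$: since $\rho'\ge 0$ and the asymptotic values \eqref{HM} are finite, such solutions do not escape $I$ in finite time, and slices intersect $K$ orthogonally, so comparison keeps $u(\cdot,t)$ inside a compact subinterval of $I$ on every bounded time window. The crucial a priori estimate is the gradient bound, equivalently a uniform upper bound on the hyperbolic angle $\Theta:=-\langle\nu,\partial_s\rangle=\rho(u)/W$. I would derive the evolution equation for $(\partial_t-\Delta_{\Sigma_t})\Theta$ and rewrite the Ricci reaction term through the conformal identity \eqref{conformal}, realising $\overline{\Ric}(\nu,\nu)$ as an average of $\overline{\Ric}$ evaluated on lightlike directions $\nu\pm e_i$; the NCC then forces the interior term to have the good sign. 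On $\partial\bar\Omega\times[0,T^*)$, the perpendicularity $\langle\nu,\mu\rangle=0$ combined with the non-negativity of the second fundamental form of $K$ yields $\partial\Theta/\partial\mu\le 0$, so Hopf's lemma rules out new boundary maxima and the maximum principle delivers $\Theta\le\max_{\Sigma_0}\Theta$ throughout the evolution.

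With $|Du|/\rho$ uniformly strictly below $1$, \eqref{Fn} is uniformly parabolic on bounded time slabs; Krylov--Safonov together with Schauder estimates for oblique parabolic problems, combined with the Ecker-type evolution of $|A|^2$ (interior controlled by the NCC, boundary by $II_K\ge 0$), furnish uniform $C^{k,\alpha}$ bounds, so $T^*=+\infty$. For the asymptotic conformality I would exploit the strict inequality in \eqref{max-princ} at some $\tilde s>\max_{\Sigma_0}s$: this upgrades the previous maximum principle argument on $\Theta$ to strict monotonicity, forcing $|Du|^2/\rho^2\to 0$ as $t\to+\infty$, whence the induced metric $-du_t\otimes du_t+\rho^2(u_t)\,\sigma$ becomes asymptotically $\rho^2(u_t)\,\sigma$, manifestly conformal to $\sigma$. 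Preservation of mean convexity then follows from the evolution $(\partial_t-\Delta_{\Sigma_t})H=(|A|^2+\overline{\Ric}(\nu,\nu))H$: the gradient bound together with the assumed upper bound on $\overline{\Ric}(\partial_s,\partial_s)$ reduces this to a linear parabolic inequality of maximum-principle type, and on the boundary $\langle\nu,\mu\rangle=0$ combined with $II_K\ge 0$ yields $\partial H/\partial\mu\ge 0$, so $H$ cannot first touch zero on $\partial\bar\Omega\times[0,\infty)$.

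I expect the main obstacle to be the uniform gradient estimate. Unlike the Dirichlet case treated by Ecker in \cite{E:97}, where $\Theta$ is pinned down on the boundary by the data, here one must produce the sign $\partial\Theta/\partial\mu\le 0$ purely from the geometry of $K$; this requires threading the perpendicular Neumann condition through the interplay between the Ricci rearrangement via lightlike directions in the interior and the second fundamental form of $K$ on the boundary. Selecting the right auxiliary function, plausibly $\Theta$ multiplied by a well-chosen function of $u$, so that the NCC and the hypothesis $(\rho'/\rho)'\le 0$ appear with mutually compatible signs in both the bulk and the boundary term, is the delicate technical point.
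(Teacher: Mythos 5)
Your overall scaffold (short-time existence, $C^0$/$C^1$/$C^2$ a~priori bounds, continuation, asymptotics) matches the paper, and you correctly identify that the Neumann condition together with $II^K\ge 0$ gives the good boundary signs $\langle\nabla\Theta,\mu\rangle\le 0$ and $\langle\nabla H,\mu\rangle\le 0$. But three of your key steps, as stated, would not go through.

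\textbf{Gradient estimate.} You assert ``the maximum principle delivers $\Theta\le\max_{\Sigma_0}\Theta$''. The evolution equation is $Q\Theta=2\rho'H-\Theta\big(|A|^2+\overline{\Ric}(\nu,\nu)+n\rho''/\rho\big)$, and while NCC kills the parenthesis (via $\overline{\Ric}(\nu,\nu)+n\rho''/\rho\ge 0$), the term $2\rho'H$ has no sign and is in fact positive precisely in the mean convex case you later want to preserve. One must first bound $H$ on its own (the paper applies the maximum principle to $H^2$, using \eqref{heat-H}, \eqref{TCC} and \eqref{nablaH2}, to get $|H|\le C_1 e^{nC_-^2T}$), and then study either $E\Theta-\phi$ with $E$ small relative to $C_1$ (Lemma on \eqref{gradientestimate}) or $\theta=\Theta/\rho$ (Proposition \ref{est-grad-2}), whose drift term is controlled by $2C_-C_1$. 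In both cases the resulting bound is \emph{time-dependent} (exponential or linear in $T$); there is no uniform-in-time $\Theta$ bound and none is needed, since any finite-$T$ bound suffices for continuation. Your closing remark that one must ``multiply $\Theta$ by a well-chosen function of $u$'' is exactly the missing idea, but the preceding confident assertion should be withdrawn.

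\textbf{Sign of the $H$-evolution and mean convexity.} In the Lorentzian setting one has $QH=-\big(|A|^2+\overline{\Ric}(\nu,\nu)\big)H$ (equation \eqref{heat-H}), opposite to the Riemannian sign you wrote. With the correct sign, preserving $H>0$ requires an \emph{upper} bound on $|A|^2+\overline{\Ric}(\nu,\nu)$, and no a~priori upper bound on $|A|^2$ is available. The paper's way out is to consider $v=H/\Theta$: combining \eqref{heat-H} and \eqref{heat-Theta} cancels $|A|^2+\overline{\Ric}(\nu,\nu)$ entirely, leaving $Qv-\tfrac{2}{\Theta}\langle\nabla v,\nabla\Theta\rangle=-2\rho' v^2+n\tfrac{\rho''}{\rho}v$, whose reaction term is controlled by the assumed upper bound on $\overline{\Ric}(\partial_s,\partial_s)=-n\rho''/\rho$; the boundary term vanishes since the $II^K(\nu,\nu)$ contributions from $\nabla H$ and $\nabla\Theta$ cancel. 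Your proposed ``gradient bound $+$ upper bound on $\overline{\Ric}(\partial_s,\partial_s)$'' does not control $|A|^2$ and so does not close the argument.

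\textbf{Asymptotics.} The paper does not argue directly that $|Du|/\rho\to 0$ from strict monotonicity of $\max\Theta$. It compares $\Sigma_t$ with the slice solutions $\{\underline s(t)\}\times M$ and $\{\overline s(t)\}\times M$ issued from just below $\min_{\Sigma_0}s$ and just above $\max_{\Sigma_0}s$, uses the avoidance principle to trap $\Sigma_t$ between them, shows that the slice oscillation $\overline s-\underline s$ is non-increasing (using $\rho/\rho'$ non-decreasing from \eqref{max-princ}) and cannot be eventually constant unless zero (this is where the strict inequality $(\rho'/\rho)'(\tilde s)<0$ enters), and then extracts a limit flow of zero oscillation by Schauder/Arzel\`a--Ascoli. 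That the limiting metric is $\rho^2\sigma$ then falls out. Your mechanism, strict monotonicity of the $\Theta$-maximum, is not established and as phrased does not follow from a pointwise strict inequality of $(\rho'/\rho)'$ at one value $\tilde s$.

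\textbf{Minor.} Your ``$\Theta:=-\langle\nu,\partial_s\rangle=\rho/W$'' is the paper's $\theta=\Theta/\rho=\cosh\alpha$; the paper's $\Theta=-\langle X,\nu\rangle=\rho^2/W$ with $X=\rho\partial_s$. The distinction matters because the evolution equation \eqref{heat-Theta} is specific to the conformal field $X$, whose closed conformal property \eqref{conformal} is what makes the computation clean.
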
 
	
	\begin{remark}
		We reinforce the reasonableness of the hypothesis \eqref{HM}. We will see in posterior computations \eqref{heat-s} that $(\frac{\partial }{\partial _t}-\Delta )s$ is positive, and as a consequence of the maximum principle that s is uniformly bounded from below, therefore $\frac{\rho '(s)}{\rho (s)}$ being bounded away from infinity will be actually a straight consequence under the assumption \eqref{max-princ}. 
		
		Remark that the mean curvature of the slices $\{s\}\times M $ are given by $H=n\frac{\rho '(s)}{\rho (s)}$ (see [\ref{O'N:83}, Proposition 7.35]). In this case the imposition involving $C_+$ means that the mean curvature of the slices are uniformly bounded away from zero. From a phenomena known as avoidance principle (see \cite{B:16}) which asserts that if two compact manifolds are initially disjoint, then they will keep so during the mean curvature flow, if we had a maximal slice (with zero mean curvature, whose flow is stationary), then our flow would also stop in height. Henceforth the imposition involving $C_+$ may be interpreted as the flow never stopping in height.
	\end{remark}
	
	If we rule the Minkowski as $-\mathbb{R}\times _s \mathbb{H}^n$, then the longtime existence of [\ref{L:14}, Theorem 1.3] as well as the the preservation of the mean convexity and the asymptotic behavior are corollary of the above result. Our technique relies on the employment of the weak maximum principle in order to obtain {\it a priori} estimates that jointly with the classical parabolic equations theory will assure the longtime solvability of the problem \eqref{Fn}. Our paper is organized as follows: in Section \ref{nonpar} we describe the problem \eqref{Fn} in a non-parametric setting. In Section \ref{equations} we derive the evolution of some important quantities as well as its derivatives in the boundary. In Section \ref{estimates} we obtain {\it a priori} $C^0$, $C^1$ and second order estimates concerning a solution of the problem \eqref{Fn}. In Section \ref{proof} we summirize the proof of the Theorem \ref{theo2}.
	
	\section{A non-parametric formulation of the problem}\label{nonpar}
	
	We have adopted from the beginning a graphical parametrization of the evolving hypersurfaces $\Sigma_t$, $t\in [0, T)$. This reduces the geometric mean curvature flow (\ref{mcf-1}) to a parabolic quasilinear PDE as we will deduce in the sequel. 
	
	The induced metric $g$ (Riemannian, since $\Sigma _t$ is supposed to be spacelike) in $\Sigma_t$ is expressed in terms of local coordinates $\{x^k\}_{k=1}^n$ in $\bar \Omega$ as
	\[
	g_{ij}=-u_iu_j+\rho^2 (u)\sigma _{ij},
	\] 
	where $\sigma_{ij}$ are the local components of the Riemannian metric $\sigma$ in $M$ and $u_i = \frac{\partial u}{\partial x^i}$. Note that $s=u(\cdot\,, t)$ along $\Sigma_t$. The timelike future pointing unit normal vector field along the graph $\Sigma_t$ is written in terms of the function $u$ and its gradient as
	\begin{eqnarray}\label{nu} 
	\nu =\frac{1}{\rho(u)}\frac{1}{\sqrt{\rho (u)^2-|Du|^2}}\bigg(\rho^2(u) \frac{\partial}{\partial s}+\sigma^{ij}\frac{\partial u}{\partial x^i} \frac{\partial}{\partial x^j}\bigg), 
	\end{eqnarray} 
	where $Du=\sigma^{ij}\frac{\partial u}{\partial x^i} \frac{\partial}{\partial x^j}$ denotes the gradient of the function $u_t:=u(\cdot,\, t)$. The fact that $\Sigma_t$ is a spacelike hypersurface, i.e., that the induced metric is Riemannian is equivalent to
	\[
	|Du(\cdot\,, t)|< \rho (u(\cdot\,, t)).
	\]
	We say that  $\Sigma_t$ is {\it strictly spacelike} if there exists a positive constant $c$ such that $|Du(\cdot\,, t)|+c< \rho (u(\cdot\,, t))$. 
	The mean curvature of $\Sigma_t$ associated to \eqref{nu} is given by 
	\begin{eqnarray*}\label{a} 
		H={\rm div}  \frac{Du}{\rho (u) W} +\frac{\rho '(u)}{W}\left(n+\frac{|Du|^2}{\rho (u)^2}\right), 
	\end{eqnarray*} 
	where $W=\sqrt{\rho ^2 - |Du|^2}$. Indeed one has
	\begin{equation}\label{nabla-frame}
	\bar\nabla_{\frac{\partial}{\partial x^i}} \frac{\partial}{\partial s} = \frac{\rho'(s)}{\rho(s)} \frac{\partial}{\partial x^i} \quad \mbox{ and } \quad \bar\nabla_{\frac{\partial}{\partial x^i}} \frac{\partial}{\partial x^j} = \rho(s)\rho'(s) \sigma_{ij }\frac{\partial}{\partial s} + \Gamma^k_{ij} \frac{\partial }{\partial x^k},
	\end{equation}
	where $\Gamma^k_{ij}$ are the Christoffel symbols of the Riemannian connection  relative to the local coordinates $\{x^k\}_{k=1}^n$ in $(M, \sigma)$. The second fundamental form of  $\Sigma_t\subset N$ with respect to the normal vector field $\nu$ is given by
	\[
	II(Y, Z)=\langle \bar \nabla_Y\nu , Z\rangle, 
	\]
	for all vector fields $Y, Z$ tangent to $\Sigma_t$. Therefore, the local components of this second fundamental form are given by
	\[
	a_{ij} =\frac{1}{\rho W} \big(\rho^2 u_{i;j} - 2\rho \rho' u_i u_j + \rho^3 \rho'\sigma_{ij}\big),
	\]
	where  $u_{i;j} = \partial_i u_j - \Gamma^k_{ij} u_k$ are the local components of the Hessian of $u$ in $(M, \sigma)$. Taking traces of the second fundamental form with respect to the induced metric, one computes the mean curvature of $\Sigma_t$, that is,
	\begin{align*}
		H& =\frac{1}{\rho^2}\bigg( \sigma^{ij}+\frac{1}{W^2} u^i u^j\bigg) a_{ij}\\
		& =\frac{\rho}{W} g^{ij} u_{i;j} + \frac{1}{\rho^3 W}\bigg(-2 \rho^3\rho' \frac{|D u|^2}{W^2}+ \rho^3 \rho' \bigg(n+\frac{|Du|^2}{W^2}\bigg)\bigg)\\
		& \,\, =\frac{1}{\rho} \bigg(\frac{1}{W} \Delta u + \frac{1}{W^3}\langle D_{Du} Du, Du\rangle\bigg) - 2 \frac{\rho'}{W} \frac{|D u|^2}{W^2} + \frac{\rho'}{W} \bigg(n+\frac{|Du|^2}{W^2}\bigg)\\
		& \,\, = \frac{1}{\rho} \bigg(\operatorname{div} \frac{Du}{W}+\frac{\rho\rho'}{W} \frac{|Du|^2}{W^2}\bigg)  - 2 \frac{\rho'}{W} \frac{|D u|^2}{W^2} + \frac{\rho'}{W} \bigg(n+\frac{|Du|^2}{W^2}\bigg)\\
		& \,\, = \frac{1}{\rho} \operatorname{div} \frac{Du}{W}   + n\frac{\rho'}{W} \\
		& = \operatorname{div}\frac{Du}{\rho W} + \frac{\rho'}{\rho^2} \frac{|Du|^2}{W}+ n\frac{\rho'}{W}\cdot
	\end{align*}
	A combination of (\ref{m}), (\ref{mcf-1}) and (\ref{nu}) yields
	\begin{eqnarray}\label{aga}
	H = -\bigg\langle \frac{\partial\varphi}{\partial t}, \nu\bigg\rangle =-\bigg\langle \frac{\partial u}{\partial t} \frac{\partial}{\partial s}, \nu\bigg\rangle =
	\frac{\rho (u)}{W}\frac{\partial u}{\partial t}\cdot
	\end{eqnarray} 
	We conclude that the non-parametric formulation of (\ref{mcf-1}) is given by the quasilinear parabolic PDE 
	\begin{equation}
	\label{mcf-eqt}
	\frac{\partial u}{\partial t} =\frac{W}{\rho} \operatorname{div} \frac{Du}{\rho W} + \frac{\rho'}{\rho} \bigg( n+\frac{|Du|^2}{\rho^2}\bigg). 
	\end{equation}
	It follows easily that the  initial  and boundary (Neumann) conditions are expressed in terms of the function $u$ as in the second and third lines in (\ref{Fn}), respectively.
	%
	%
	%
	%
	%

	We observe that the conformal vector field $X$ generates a one parameter family of conformal maps. Indeed the warped metric (\ref{warped-m}) can be rewritten as
	\[
	\rho^2(s) \bigg(-\frac{ds^2}{\rho^2(s)}+ \sigma\bigg) =:\lambda^2(\varsigma) (-d\varsigma^2 + \sigma),
	\]
	where $\varsigma \in J$ is the parameter of the conformal flow $\Phi_\varsigma:=\Phi(\varsigma, \cdot\,): N\to N$
	generated by $X$. This parameter is given by
	\[
	\varsigma(s)= \int_0^s \frac{dr}{\rho(r)}
	\]
	and $J = \varsigma(I)$.
	The conformal factor $\lambda=\lambda(\varsigma)$ depends only on this parameter and it is related to the warping function $\rho=\rho(s)$ by
	\[
	\lambda(\varsigma(s)) = \rho(s).
	\]
	Hence
	\[
	\frac{d\lambda}{d\varsigma}\varsigma'(s) = \rho'(s),
	\]
	that is,
	\begin{equation*}
		\frac{d\lambda}{d\varsigma}(\varsigma(s)) = \rho(s) \rho'(s).
	\end{equation*}
	We can parametrize the evolving graphs $\Sigma_t$, $t\in [0, T)$, in terms of the parameter $\varsigma$ defining the time-dependent function $z=\varsigma(x,t)$. Hence
	\[
	\frac{\partial u}{\partial t} = \frac{ds}{d\varsigma} \frac{\partial z}{\partial t} =\rho(u(x,t)) \frac{\partial z}{\partial t} 
	\]
	and
	\[
	Du = \rho Dz \quad\mbox{ and } \quad W = \rho \sqrt{1-|Dz|^2} =: \rho \mathcal{W}. 
	\]
	Hence the prescribed mean curvature equation is written in terms of $z$ as
	\begin{equation*}
		\label{z}
		\operatorname{div} \frac{Dz}{\mathcal{W}} + n \frac{\rho ' }{\rho}\frac{1}{\mathcal{W}} - \rho H =0
	\end{equation*}
	and the non-parametric mean curvature flow equation \eqref{mcf-eqt} becomes
	\begin{equation}
	\label{mcf-z}
	\rho \frac{\partial z}{\partial t} = \frac{\mathcal{W}}{\rho} \operatorname{div} \frac{Dz}{\mathcal{W}} + n \frac{\rho ' }{\rho}\cdot
	\end{equation}
	In local coordinates this equation becomes
	\begin{equation*}
		\frac{\partial z}{\partial t} = \frac{1}{\rho^2} \bigg( \sigma^{ij} + \frac{1}{\mathcal{W}^2} z^i z^j \bigg) z_{i;j} +  n \frac{\rho ' }{\rho^2},
	\end{equation*}
	that is,
	\begin{equation*}
		\frac{\partial z}{\partial t} = \frac{1}{\rho}H\mathcal{W}.
	\end{equation*}
	We conclude from \eqref{mcf-z} that the parabolic maximum principle holds for the mean curvature flow in this setting if 
	\begin{equation*}
		\rho'(s) \ge 0 \quad \mbox{ and } \quad \Big(\frac{\rho'(s)}{\rho(s)}\Big) '  \le 0,
	\end{equation*}
	justifying the hypothesis \eqref{max-princ}.
	
	
	\section{Fundamental equations}\label{equations}
	\subsection{Evolution equations by MCF}
	
	In this section we will compute the evolution equations of some  geometric  invariants of the evolving graphs $\Sigma_t$, $t\in [0, T)$. Firstly, we fix some notations. Let $\phi: I\to \mathbb{R}$ be a solution of the equation
	\begin{equation*}
		\phi(s) = \int^s_{s_0} \rho(r) {\rm d}r,
	\end{equation*} 
	for some fixed $s_0\in I$. We also consider the {\it support function}
	\begin{equation*}
		\label{Theta}
		\Theta =-\langle X,\nu\rangle.
	\end{equation*}
	It follows from (\ref{nu}) that 
	\begin{equation}
	\label{thetaW}
	\Theta = \frac{\rho^2}{W}\cdot
	\end{equation}
	This function can be also written in terms of the hyperbolic angle $\alpha $ between $X=\rho\frac{\partial}{\partial s}$ and $\nu$ as follows
	\begin{equation}
	\label{thetarho}
	\Theta = \rho \cosh\alpha .
	\end{equation}
	Hence, $\Theta$ can be used to estimate the gradient of the functions $u(\cdot,\, t), t\in [0, T)$. 
	In what follows, $\nabla$ denotes the Riemannian connection in $\Sigma_t$. Hence we have
	\begin{equation}\label{nabla-s}
	\bar\nabla s = -\frac{\partial}{\partial s}
	\end{equation}
	and
	\begin{equation*}
		\nabla s = -\Big(\frac{\partial}{\partial s}\Big)^\top = -\frac{\partial }{\partial s} - \Big\langle \frac{\partial}{\partial s}, \nu\Big\rangle \nu = -\frac{\partial}{\partial s} +\frac{\Theta }{\rho}\nu,
	\end{equation*} 
	where $(\cdot )^\top $ stands for the orthogonal  tangential  projection onto $\Sigma_t$.

	\begin{lemma}\label{ev} Let $\Delta$ be the Laplace-Beltrami operator in $\Sigma_t$ with respect to the time-dependent induced metric $g$ and let
		$Q = \frac{\partial}{\partial t}-\Delta$. Then we have
		\begin{align} 
			& \label{heat-s} Qs = \frac{\rho'}{\rho} (n+|\nabla s|^2) \\
			& \label{heat-phi}  Q\phi =n\rho'\\
			& \label{heat-Theta} Q\Theta =2\rho' H-\Theta \bigg( {\rm \overline{Ric}}(\nu,\nu)+|A|^2+n\frac{\rho''}{\rho}\bigg)\\
			& \label{heat-H} QH=-(|A|^2+ {\rm \overline{Ric}}(\nu, \nu )) H \\
			& \label{heat-kappa}
			Q\kappa = C_0n + C_0\frac{\rho }{\rho '} \frac{\rho '' }{\rho ' } |\nabla s|^2,
		\end{align} 
		where $\kappa(s) = C_0\int^s_{s_0} \frac{\rho(r)}{\rho'(r)} dr,$ for $s_0\in I$ is arbitrarily fixed and $C_0>0$ a constant to be fixed later.
	\end{lemma}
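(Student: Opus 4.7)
The plan treats the five equations in two groups. The first, for $s$, $\phi$, and $\kappa$, all of which are (or are built from) ambient functions restricted to $\Sigma_t$, fits a single template; the second, for $\Theta$ and $H$, requires direct computation because both depend on the normal $\nu$. The basic template is: for any smooth $f\colon N\to\mathbb{R}$, the pull-back $f\circ\varphi$ satisfies $\partial_t(f\circ\varphi) = H\langle\bar\nabla f,\nu\rangle$, while the Lorentzian Gauss formula yields $\Delta(f|_{\Sigma_t}) = \bar\Delta f + \bar\nabla^2 f(\nu,\nu) + H\langle\bar\nabla f,\nu\rangle$, the sign in front of the second term reflecting $\langle\nu,\nu\rangle=-1$. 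Subtracting gives the compact identity
\[
Qf = -\bar\Delta f - \bar\nabla^2 f(\nu,\nu).
\]

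For \eqref{heat-s} I substitute $\bar\nabla s = -\partial_s$ and compute the ambient Hessian from \eqref{nabla-frame}: in a Lorentzian orthonormal frame $\{\partial_s, E_a\}$ one has $\bar\nabla^2 s(E_a,E_a) = -\rho'/\rho$ and $\bar\nabla^2 s(\partial_s,\partial_s)=0$, so $\bar\Delta s = -n\rho'/\rho$. Decomposing $\nu = (\Theta/\rho)\partial_s + \nu^\ast$ with $\nu^\ast$ horizontal, so $|\nu^\ast|_\sigma^2 = \Theta^2/\rho^2 - 1$, yields $\bar\nabla^2 s(\nu,\nu) = -(\rho'/\rho)(\Theta^2/\rho^2 - 1)$; combined with the identity $|\nabla s|^2 = \Theta^2/\rho^2 - 1$ (obtained by tangentially projecting $-\partial_s$ onto $\Sigma_t$), this delivers \eqref{heat-s}. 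Equations \eqref{heat-phi} and \eqref{heat-kappa} then follow from the chain rule $Q(F\circ s) = F'(s)\,Qs - F''(s)|\nabla s|^2$: with $F=\phi$ the relations $\phi'=\rho$, $\phi''=\rho'$ make the $|\nabla s|^2$ contributions cancel and leave $n\rho'$; with $F=\kappa$, $\kappa' = C_0\rho/\rho'$ and $\kappa'' = C_0(1 - \rho\rho''/(\rho')^2)$, and a brief expansion recovers the stated formula. Equation \eqref{heat-H} is the standard Simons-type identity for MCF in a semi-Riemannian ambient: from $\partial_t g_{ij} = 2Ha_{ij}$ (hence $\partial_t g^{ij} = -2Ha^{ij}$) together with the Weingarten, Codazzi, and Gauss formulas one finds an expression for $\partial_t a_{ij}$ of the form $\nabla_i\nabla_j H$ plus lower-order curvature and second-fundamental-form terms, and tracing produces $QH = -(|A|^2 + \overline{\Ric}(\nu,\nu))H$.

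The main work is \eqref{heat-Theta}. From $\partial_t\nu = \nabla H$ (a consequence of differentiating $\langle\nu,\nu\rangle = -1$ and $\langle\nu,\partial_i\varphi\rangle = 0$ along the flow) and $\partial_t X|_\varphi = H\bar\nabla_\nu X = H\rho'\nu$ I obtain $\partial_t\Theta = H\rho' - \langle X^\top,\nabla H\rangle$. Applying $\bar\nabla_Y X = \rho' Y$ to the splitting $X = X^\top + \Theta\nu$ and separating tangential from normal parts simultaneously yields $\nabla\Theta = -AX^\top$ and $\nabla_Y X^\top = \rho' Y - \Theta AY$. Taking the divergence of $-AX^\top$ in an orthonormal frame, applying the traced Codazzi identity $\sum_i(\nabla_{e_i}A)e_i = \nabla H + \overline{\Ric}(\nu,\cdot)^\top$, and inserting the expression for $\nabla_Y X^\top$ give
\[
\Delta\Theta = -\langle\nabla H, X^\top\rangle - \overline{\Ric}(X^\top,\nu) - \rho' H + \Theta|A|^2,
\]
so that the $\nabla H$ cross-terms cancel upon subtraction and leave $Q\Theta = 2\rho' H + \overline{\Ric}(X^\top,\nu) - \Theta|A|^2$. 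The warped-product Ricci identities $\overline{\Ric}(\partial_s,\partial_s) = -n\rho''/\rho$ and $\overline{\Ric}(\partial_s, Y) = 0$ for horizontal $Y$ then give $\overline{\Ric}(X,\nu) = -n\Theta\rho''/\rho$, hence $\overline{\Ric}(X^\top,\nu) = \overline{\Ric}(X,\nu) - \Theta\,\overline{\Ric}(\nu,\nu) = -n\Theta\rho''/\rho - \Theta\,\overline{\Ric}(\nu,\nu)$, producing \eqref{heat-Theta}. This support-function calculation is the principal obstacle: one must manage Lorentzian signs carefully in the Codazzi identity, verify that the $\nabla H$ cross-terms cancel between $\partial_t\Theta$ and $\Delta\Theta$, and reduce $\overline{\Ric}(X^\top,\nu)$ to a combination of $\overline{\Ric}(\nu,\nu)$ and $\rho''/\rho$ via the warped-product structure; the remaining four equations then follow routinely from the ambient-function template, the chain rule, or the classical evolution of the mean curvature.
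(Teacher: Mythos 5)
Your proposal is correct, but it takes a genuinely different route from the paper's own proof, and the contrast is instructive. For the scalar quantities $s$, $\phi$, and $\kappa$, you work from the ambient-function template $Qf = -\bar\Delta f - \bar\nabla^2 f(\nu,\nu)$ and compute the warped-product Hessian directly; the paper instead exploits the identity $\nabla\phi = -X^\top$ together with the closed-conformal property $\bar\nabla_Y X = \rho' Y$ to read off $\Delta\phi = -n\rho' + H\Theta$ in one line, and then obtains $Qs$ from $Q\phi$ by the chain rule. Both are valid; your template is more systematic while the paper's shortcut leans on the special structure of $X$. The larger divergence is in \eqref{heat-Theta}. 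The paper introduces an auxiliary flow $\partial_\varsigma\psi = X = \Theta\nu + X^\top$ generated by the conformal field, derives the evolution of $g$, $a_{ij}$, and $H$ along that flow, and then isolates $\Delta\Theta$ by comparing to the MCF evolution; the term $n\frac{\rho''}{\rho}\Theta$ appears through a separate computation of $X[H]$. You instead go directly through $\nabla\Theta = -AX^\top$, the derivative $\nabla_Y X^\top = \rho' Y - \Theta A Y$, and the traced Codazzi identity $\operatorname{div}A = \nabla H + \overline{\operatorname{Ric}}(\nu,\cdot)^\top$ (with the sign governed by $\langle\nu,\nu\rangle = -1$ and the curvature-tensor convention $\bar R(X,Y)Z = \bar\nabla_X\bar\nabla_Y Z - \bar\nabla_Y\bar\nabla_X Z - \bar\nabla_{[X,Y]}Z$, under which the Codazzi relation is $\nabla_i a_{jk} - \nabla_j a_{ik} = +L_{ijk}$); then the reduction of $\overline{\operatorname{Ric}}(X^\top,\nu)$ to $-n\frac{\rho''}{\rho}\Theta - \Theta\,\overline{\operatorname{Ric}}(\nu,\nu)$ via the warped-product Ricci block-diagonal structure is clean and avoids the auxiliary flow altogether. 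Your route is more elementary and probably easier to audit for signs; the paper's route has the advantage that it derives $\Delta\Theta$, $QH$ and the identity $X[H] = -\rho' H + n\frac{\rho''}{\rho}\Theta$ from a single unified variational computation. Both hinge on the same underlying facts (Weingarten, Gauss, Codazzi, the warped-product curvature of $N$), and both arrive at the stated formulas.
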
 
	\begin{proof} 
		First of all, we observe that
		\begin{align}\label{ds}
			\frac{\partial s}{\partial t}=\Big\langle \bar\nabla s , \frac{\partial\varphi}{\partial t}\Big\rangle = -H\Big\langle \frac{\partial}{\partial  s}, \nu \Big\rangle . 
		\end{align} 
		from what follows that
		\begin{align}\label{phii}
			\frac{\partial \phi }{\partial t} = \rho \frac{\partial s}{\partial t}=-H\langle X, \nu \rangle =H\Theta. 
		\end{align}
		Now, since 
		\begin{eqnarray}\label{phi} 
		\nabla \phi =\rho (s)\nabla s=-X^\top , 
		\end{eqnarray} 
		using an orthonormal local frame $\{{\sf e}_k\}_{k=1}^n$ in $\Sigma_t$ and equation \eqref{conformal} yields
		\begin{align}\label{phiii} 
			\Delta \phi  =  -\sum_k \langle \bar\nabla_{{\sf e}_k} X^\top, {\sf e}_k\rangle =  -\sum_k \langle \bar\nabla_{{\sf e}_k} X, {\sf e}_k\rangle + \sum_k \langle \bar\nabla_{{\sf e}_k} \Theta \nu, {\sf e}_k\rangle=-n\rho' +H\Theta.
		\end{align} 
		Therefore joining \eqref{phii} and \eqref{phiii}, one gets (\ref{heat-phi}). These calculations also yields \eqref{heat-s}.
		
		Now, recall the evolution equation of the normal vector field $\nu$ under the mean curvature flow (\ref{mcf-1}). We have
		\begin{equation}
		\label{evol-nu}
		\Big\langle \bar\nabla_{\frac{\partial \varphi}{\partial t}} \nu, \frac{\partial \varphi}{\partial x^i}\Big\rangle = - \Big\langle \nu, \bar\nabla_{\frac{\partial \varphi}{\partial x^i}}\frac{\partial \varphi}{\partial t}\Big\rangle
		=- \Big\langle \nu, \bar\nabla_{\frac{\partial \varphi}{\partial x^i}}H\nu\Big\rangle  = \frac{\partial H}{\partial x^i}\cdot
		\end{equation}
		Therefore, using this expression and (\ref{conformal}) one obtains
		\begin{align}
			\label{dt-Theta}
			\frac{\partial \Theta}{\partial t} = -\big\langle \bar\nabla_{\frac{\partial\varphi}{\partial t}} X, \nu\big\rangle -\big\langle X, \bar\nabla_{\frac{\partial\varphi}{\partial t}} \nu\big\rangle =\rho ' H-\langle \nabla H, X\rangle.
		\end{align}
		Now, we recall the evolution of the induced metric with respect to a flow $\psi$ of the form
		\begin{equation}
		\label{alt-0}
		\frac{\partial \psi}{\partial \varsigma} = X,
		\end{equation}
		that is
		\begin{equation}
		\label{alt}
		\frac{\partial \psi}{\partial \varsigma} = \Theta \nu +X^\top.
		\end{equation}
		We have
		\begin{align*} \frac{\partial}{\partial \varsigma} g_{ij} = \Big\langle \bar\nabla_{\frac{\partial}{\partial x^i}} \Theta \nu, \frac{\partial}{\partial x^j}\Big\rangle + \Big\langle \frac{\partial}{\partial x^j}, \bar\nabla_{\frac{\partial}{\partial x^j}} \Theta \nu\Big\rangle + \pounds_{X^\top} g_{ij} = 2a_{ij}\Theta + \pounds_{X^\top} g_{ij}.
		\end{align*}
		The normal vector field evolves according to the expression
		\begin{equation*}
			\label{evo-nu}
			\bar\nabla_{\frac{\partial \psi}{\partial \varsigma}} \nu = \nabla \Theta + AX^\top,
		\end{equation*}
		where $A$ denotes the Weingarten map of $\Sigma _t$, from what follows the evolution law of the second fundamental form
		\[
		\frac{\partial}{\partial \varsigma} a_{ij}= \Theta_{i;j} -\Big\langle\bar R\Big(\frac{\partial}{\partial x^i}, \nu\Big)\nu, \frac{\partial}{\partial x^j}\Big\rangle \Theta + a_{i}^k a_{kj}\Theta + \pounds_{X^\top} a_{ij}.
		\]
		We conclude that the mean curvature evolves along the flow (\ref{alt}) as
		\begin{align*}
			\frac{\partial H}{\partial \varsigma} = \frac{\partial g^{ij}}{\partial \varsigma} a_{ij} + g^{ij}\frac{\partial }{\partial \varsigma} a_{ij}= -2  a^{ij}a_{ij}\Theta + \Delta \Theta - \overline{\rm Ric}(\nu, \nu) \Theta + a^{ij}a_{ij}\Theta + \pounds_{X^\top} H.
		\end{align*}
		Therefore we obtain
		\begin{align*}
			\Delta \Theta = |A|^2\Theta +\overline{\rm Ric}(\nu, \nu) \Theta + \frac{\partial H}{\partial \varsigma}- \pounds_{X^\top} H.
		\end{align*}
		Combining this equation and (\ref{dt-Theta}) and observing that $\pounds_{X^\top} H = \langle X, \nabla H\rangle$  one concludes that 
		\begin{align*}
			\Big(\Delta -\frac{\partial}{\partial t}\Big) \Theta = |A|^2\Theta +\overline{\rm Ric}(\nu, \nu) \Theta - \rho' H + \frac{\partial H}{\partial \varsigma}\cdot
		\end{align*}
		We can check that
		\begin{equation*}
			\frac{\partial H}{\partial \varsigma} = X[H]= -\rho' H + n \frac{\rho ''}{\rho} \Theta. 
		\end{equation*}
		Indeed, given coordinate vector fields $\{\partial_i\}_{i=1}^n$ in $\Sigma_t$ we compute
		\begin{eqnarray*}
			X\langle\bar\nabla_{\partial_i} \partial_j, \nu\rangle & =& \langle \bar\nabla_X \bar\nabla_{\partial_i} \partial_j, \nu\rangle + \langle \bar\nabla_{\partial_i} \partial_j, \bar\nabla_{X} \nu\rangle\\
			& = & \langle \bar\nabla_{\partial_i} \bar\nabla_X \partial_j, \nu\rangle + \langle \bar\nabla_{[X, \partial_i]} \partial_j, \nu\rangle + \langle \bar R(X,\partial_i)\partial_j,\nu\rangle + \langle \bar\nabla_{\partial_i} \partial_j, \bar\nabla_{X} \nu\rangle \\
			&=& \langle \bar\nabla_{\partial_i} \bar\nabla_{\partial_j} X, \nu\rangle + \langle \bar R(\partial_i, X)\nu, \partial_j\rangle + \langle \bar\nabla_{\partial_i} \partial_j, \bar\nabla_{X} \nu\rangle + \langle \bar\nabla_{[X, \partial_i]} \partial_j, \nu\rangle \\
			& =& \rho'\langle \bar\nabla_{\partial_i} \partial_j, \nu\rangle + \langle \bar R(\partial_i, X)\nu, \partial_j\rangle + \langle \nabla_{\partial_i} \partial_j, \bar\nabla_{X} \nu\rangle + \langle \bar\nabla_{[X, \partial_i]} \partial_j, \nu\rangle .
		\end{eqnarray*}
		We may choose coordinates such that $\nabla_{\partial_i} \partial_j =0$ at the point we are computing. Moreover, since $X$ and $\partial_i$ are push-forwards of coordinate vector fields in $\Sigma_t \times J$ we have $[X, \partial_i]=0$. Hence
		\[
		X\langle\bar\nabla_{\partial_i} \partial_j, \nu\rangle =\rho'\langle \bar\nabla_{\partial_i} \partial_j, \nu\rangle + \langle \bar R(\partial_i, X)\nu, \partial_j\rangle.
		\]
		Then taking traces in the expression above we get
		\begin{eqnarray*}
			g^{ij}X[a_{ij}]   =\rho' H-\overline{\rm Ric}(X, \nu) .
		\end{eqnarray*}
		Note that
		\[
		\pounds_X a_{ij} = X[a_{ij}] -\langle [X, \partial_i], A\partial_j\rangle - \langle A\partial_i, [X, \partial_i]\rangle =X[a_{ij}].
		\]
		Moreover
		\begin{eqnarray*}
			& & \pounds_X g_{ij} = X[g_{ij}] -\langle [X, \partial_i], \partial_j\rangle - \langle \partial_i, [X,\partial_j]\rangle = \langle \bar\nabla_{\partial_i}X, \partial_j\rangle + \langle \partial_i, \bar\nabla_{\partial_i}X\rangle   %
		\end{eqnarray*}
		and
		\begin{eqnarray*}
			a_{ij}\pounds_X g^{ij}  =  -2\rho ' H.
		\end{eqnarray*}
		Therefore
		\begin{eqnarray*}
			X[H] =\pounds_X H =  g^{ij} \pounds_X a_{ij} + a_{ij} \pounds_X g^{ij} =   \rho' H-\overline{\rm Ric}(X, \nu) -2\rho ' H = -\rho ' H -  {\rm Ric}_{\bar M}(X, \nu).
		\end{eqnarray*}
		However
		\begin{equation*}
			\overline{\rm Ric} (X, \nu) = -n\frac{\rho''}{\rho} \langle X, \partial_s\rangle \langle \nu, \partial_s\rangle = -n\frac{\rho''}{\rho}\langle X, \nu\rangle.
		\end{equation*}
		Therefore
		\[
		X[nH] = -\rho ' H +n\frac{\rho ''}{\rho}\Theta, 
		\]
		as we claimed. We  refer the reader to  [\ref{BBC:75}, Proposition 3.1] for the proof of similar expressions in a more general context. 

		In the same way, using the flow (\ref{mcf-1}) one gets
		\begin{equation*}
			\Delta H = |A|^2 H +\overline{\rm Ric}(\nu, \nu) H + \frac{\partial H}{\partial t}
		\end{equation*}
		from what follows (\ref{heat-H}).
		
		Let us compute \eqref{heat-kappa}. We have from \eqref{heat-s} that
		\begin{align*}
			Q\kappa &=\kappa' Qs - \kappa''|\nabla s|^2 = \kappa' \frac{\rho'}{\rho} (n+|\nabla s|^2) - \kappa''|\nabla s|^2 \\
			&  = n  \kappa' \frac{\rho'}{\rho} + \kappa' \Big( \frac{\rho'}{\rho}-\frac{\kappa ''}{\kappa '} \Big)|\nabla s|^2 .
		\end{align*}
		We conclude that
		\begin{equation*}
			Q\kappa = C_0n + C_0\frac{\rho }{\rho '} \frac{\rho '' }{\rho ' } |\nabla s|^2.
		\end{equation*}
	\end{proof} 
	
	\subsection{Derivatives on the boundary} 
	We will study the boundary derivatives of some suitable functions in order to employ Lemma \ref{wmp} in posterior computations. 
	
	\begin{lemma}\label{boundary}  Let $II^K$ be  the second fundamental form of the conformal cylinder $K\subset N$ with respect to the outward normal vector field $\mu$  defined by
		\[
		II^K(U, V)=\langle \bar\nabla _U\mu , V\rangle
		\] 
		for all vector fields $U,V $ tangent to $K$. Then, in any point of $K$ and for any vector field $U$ tangent to $K$ we have
		\begin{align} 
			& \label{nablarho} \langle \nabla \rho ,\mu \rangle =0\\
			& \label{IIK} II(\mu , U)=-II^K(\nu, U) \\
			& \label{nablatheta} \langle \nabla \Theta , \mu \rangle = -\Theta II^K (\nu, \nu )\\ 
			& \label{nablaH2} \langle \nabla H, \mu \rangle =-H II^K(\nu, \nu ).\\
			& \label{nablakappa}
			\langle \nabla \kappa, \mu\rangle =0
		\end{align} 
	\end{lemma}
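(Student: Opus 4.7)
The plan is to prove the five identities in order: the first two are nearly immediate from the expression of the tangential gradient of a function of $s$, the third is a one-line differentiation of the Neumann condition, the fourth uses the warped-product structure of $N$ together with the decomposition $X = -\nabla\phi + \Theta\nu$, and the fifth (\ref{nablaH2}) will be the main obstacle and is to be attacked via the Codazzi equation.

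For (\ref{nablarho}) and (\ref{nablakappa}): both $\rho = \rho(s)$ and $\kappa = \kappa(s)$ are functions of $s$ alone, so $\nabla\rho = \rho'(s)\nabla s$ and $\nabla\kappa = \kappa'(s)\nabla s$. By the formula for the tangential projection, $\nabla s = -\partial/\partial s + (\Theta/\rho)\nu$. The outward normal $\mu$ to $K = \partial\bar\Omega\times I$ lies in the $M$-direction, hence is orthogonal to $\partial/\partial s$, while the Neumann condition gives $\langle\nu,\mu\rangle = 0$; therefore $\langle\nabla s,\mu\rangle = 0$ on $K$, and both identities follow.

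For (\ref{IIK}): let $U \in T\partial\Sigma_t$, which is tangent to both $\Sigma_t$ and $K$. Differentiating the identity $\langle\mu,\nu\rangle = 0$ (which holds on $K$) in the direction of $U$ and using the Leibniz rule yields $\langle\bar\nabla_U\mu,\nu\rangle + \langle\mu,\bar\nabla_U\nu\rangle = 0$, which is precisely $II^K(\nu,U) + II(\mu,U) = 0$ after invoking the symmetry of both second fundamental forms.

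For (\ref{nablatheta}): differentiate $\Theta = -\langle X,\nu\rangle$ in the direction of $\mu$. The conformal relation $\bar\nabla_\mu X = \rho'\mu$ together with Neumann makes $\langle\bar\nabla_\mu X,\nu\rangle = 0$ on $K$, so $\nabla_\mu\Theta = -\langle X,\bar\nabla_\mu\nu\rangle$. Since $\bar\nabla_\mu\nu$ is tangent to $\Sigma_t$, only the tangential part $X^\top = -\nabla\phi$ (from (\ref{phi})) contributes, giving $\nabla_\mu\Theta = -II(\mu,\nabla\phi)$. On $\partial\Sigma_t$, $\nabla\phi = -X + \Theta\nu$ is tangent to $K$ (since both $X = \rho\,\partial/\partial s$ and $\nu$ are), so (\ref{IIK}) applies: $II(\mu,\nabla\phi) = -II^K(\nu,\nabla\phi) = -\Theta\,II^K(\nu,\nu) + II^K(\nu,X)$. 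The remaining piece $II^K(\nu,X)$ vanishes: writing $\mu = \rho^{-1}\mu_0$ where $\mu_0$ is the $\sigma$-unit outward normal of $\partial\bar\Omega$ lifted to $N$, the warped-product connection gives $\bar\nabla_{\partial/\partial s}\mu_0 = (\rho'/\rho)\mu_0$, and the two terms resulting from expanding $\bar\nabla_{\partial/\partial s}(\rho^{-1}\mu_0)$ cancel, so $\bar\nabla_X\mu = 0$ and hence $II^K(\nu,X) = 0$. Combining gives (\ref{nablatheta}).

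For (\ref{nablaH2}): this is the anticipated main obstacle. Fix a point of $\partial\Sigma_t$ and choose an orthonormal frame $\{e_1,\ldots,e_{n-1},\mu\}$ of $T\Sigma_t$ with the $e_\alpha$ tangent to $\partial\Sigma_t$, extended along $\partial\Sigma_t$ by parallel transport at the point. Then $\nabla_\mu H = \sum_\alpha(\nabla_\mu II)(e_\alpha,e_\alpha) + (\nabla_\mu II)(\mu,\mu)$. The Codazzi equation in $N$ lets one exchange $(\nabla_\mu II)(e_\alpha,e_\alpha)$ for $(\nabla_{e_\alpha}II)(\mu,e_\alpha)$ modulo a curvature term $\langle\bar R(\mu,e_\alpha)e_\alpha,\nu\rangle$. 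Along $\partial\Sigma_t$ one uses (\ref{IIK}) to replace $II(\mu,e_\alpha)$ by $-II^K(\nu,e_\alpha)$, so the tangential derivative becomes a derivative of an intrinsic quantity of $K$; a second application of Codazzi, this time on $K$, rearranges it into $-II^K(\nu,\nu)$ times the trace $\sum_\alpha II(e_\alpha,e_\alpha)$, modulo curvature terms. The warped-product form of $\bar R$, together with $\bar\nabla_X\mu = 0$ and the cylindrical symmetry of $K$, makes all residual curvature contributions cancel, and combining them with the boundary value of $II(\mu,\mu)$ (computed via (\ref{IIK}) with $U = \mu$ for the tangent directions) yields $\nabla_\mu H = -II^K(\nu,\nu)H$. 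The delicate bookkeeping of the two Codazzi applications and the cancellation of curvature terms using the warped structure is the hardest step.
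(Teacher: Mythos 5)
Your arguments for \eqref{nablarho}, \eqref{nablakappa}, \eqref{IIK}, and \eqref{nablatheta} are correct and essentially coincide with the paper's; for \eqref{nablatheta} you supply a welcome extra detail by computing $\bar\nabla_X\mu=0$ explicitly via $\mu=\rho^{-1}\mu_0$ and the warped connection, where the paper simply appeals to $X$ being a principal direction of $K$ with zero principal curvature.

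For \eqref{nablaH2}, however, you have a genuine gap, and you also missed the argument that actually makes this a one-liner. You propose a static Codazzi computation: exchange $(\nabla_\mu II)(e_\alpha,e_\alpha)$ for a tangential derivative of $II(\mu,e_\alpha)$ modulo an ambient curvature term, convert to $II^K$ via \eqref{IIK}, apply Codazzi again on $K$, and assert that ``all residual curvature contributions cancel'' and that this yields $\nabla_\mu H=-H\,II^K(\nu,\nu)$. None of that last step is carried out, and it is not plausible as stated: in this double-Codazzi approach one typically picks up terms involving $II(\mu,\mu)$, traces of $II$ restricted to $\partial\Sigma_t$, and derivatives of $\nu$ along $K$ that are \emph{not} controlled by the warped structure alone. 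More to the point, \eqref{nablaH2} is not a static identity about an arbitrary hypersurface meeting $K$ orthogonally; it uses the mean curvature flow. The paper's proof differentiates the Neumann condition $\langle\mu,\nu\rangle=0$ in $t$ along the flow: since $\partial_t\varphi=H\nu$ is tangent to $K$ at boundary points, $\bar\nabla_{H\nu}\mu$ is defined, and $\bar\nabla_{\partial_t\varphi}\nu=\nabla H$ by \eqref{evol-nu}; hence
\[
0=\frac{d}{dt}\langle\mu,\nu\rangle=\langle\bar\nabla_{H\nu}\mu,\nu\rangle+\langle\mu,\nabla H\rangle=H\,II^K(\nu,\nu)+\langle\mu,\nabla H\rangle,
\]
giving \eqref{nablaH2} immediately. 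This is the key idea you should incorporate: it avoids Codazzi entirely, and it makes clear that the identity is an evolution fact (it encodes that the perpendicular Neumann condition is preserved by the MCF), not a pointwise geometric one. Until you carry out the curvature bookkeeping you allude to, your version of \eqref{nablaH2} cannot be considered proved, and I would expect that bookkeeping either to leave extra terms or to secretly re-derive the time-derivative argument.
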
 
	
	\begin{remark}
		\label{rem-conv}
		The convexity of $K$ implies that $II^K(\nu, \nu)\ge 0$.
	\end{remark}
	
	\begin{remark}
		Henceforth we will keep the notation $Q = \frac{\partial}{\partial t}-\Delta$.
	\end{remark}
	
	\begin{proof} It follows from  \eqref{nabla-s} that
		\[
		\bar\nabla \rho = \rho' (s) \bar\nabla s = -\frac{\rho' (s) }{\rho(s)}\rho(s)\frac{\partial}{\partial s} = -\frac{\rho' (s) }{\rho(s)} X. 
		\]
		Hence, \eqref{nablarho} follows from the fact that $X$ is tangent to the cylinder $K$. 
		
		Now, in order to prove (\ref{IIK}) we first note that the Neumann condition can be written as $\langle \nu, \mu\rangle = 0$. Using this, one has
		\[
		II(\mu, U) = \langle \bar\nabla_{U} \nu, \mu\rangle = U\langle \nu, \mu\rangle - \langle \nu, \nabla_U \mu\rangle = -II^K(\nu, U). 
		\]

		The proof of (\ref{nablatheta}) combines  \eqref{conformal}, \eqref{IIK} and again the Neumann condition, as follows:
		\begin{align*} 
			\langle \nabla \Theta , \mu \rangle & =  -\langle \bar\nabla _ \mu X, \nu \rangle - \langle X , \bar\nabla _ \mu \nu \rangle \\
			& =  -\rho '(s) \langle \mu, \nu \rangle -II(\mu, X^\top)\\
			& =  II^K(\nu, X^\top)\\ 
			& = II^K(\nu, X) -\Theta II^K(\nu, \nu) \\
			& =  -\Theta II^K(\nu, \nu ),
		\end{align*} 
		where we also used the fact that $X$ is a principal direction in $K$ corresponding to a null principal curvature. 
		
		
		Using \eqref{mcf-1} and \eqref{evol-nu} one computes 
		\begin{eqnarray*} 
			0 =  \frac{d}{dt}\langle \mu, \nu \rangle =  \langle \bar \nabla _ {H\nu}\mu, \nu \rangle + \langle \mu, \nabla H \rangle  =  H II^K(\nu, \nu) +\langle \mu, \nabla H\rangle , 
		\end{eqnarray*} 
		from what we easily deduce \eqref{nablaH2}.
		
		FInally, \eqref{nablakappa} follows from $\langle \nabla s, \mu\rangle=0$.
	\end{proof} 
	
	\section{Maximum Principles and a priori estimates}\label{estimates} 
	
	Along this section we will obtain the necessary estimates relying upon the following lemmas:
	
	\begin{lemma}\label{wmp}
		Let $\bar \Omega $ be a compact domain with smooth boundary and let $f:\bar \Omega \times [0,T)\to \mathbb{R}$ be a function twice differentiable on space and once in time. 
		Let $Q=\Big( \frac{\partial }{\partial t}-\Delta \Big) $, where $\Delta $ is the time-dependent Laplace-Beltrami operator. Suppose that
		$$
		\begin{cases} 
		Qf(x, t)\leq 0 \, \mbox{ for all } \, (x, t)\in \bar \Omega \times [0, T)\,  \mbox{ such that } \,\nabla f(x)=0, & \\ 
		\langle \nabla f, \mu \rangle \leq 0 \,\mbox{ for all } \, (x, t)\in \partial \bar \Omega \times [0,T). & 
		\end{cases} $$
		Then $\sup _{\bar\Omega \times [0,T)}f\leq \max _{\bar \Omega }f(x, 0) $.
		In addition, if
		$$\begin{cases} 
		Qf(x, t)\geq 0 \, \mbox{ for all } \, (x, t)\in \bar \Omega \times [0, T)\,  \mbox{ such that } \,\nabla f(x)=0, & \\ 
		\langle \nabla f, \mu \rangle \geq 0 \,\mbox{ for all } \, (x, t)\in \partial \bar \Omega \times [0,T), & 
		\end{cases} $$
		then  $\inf _{\bar\Omega \times [0,T)}f\geq \min _{\bar \Omega }f(x, 0) $.
	\end{lemma}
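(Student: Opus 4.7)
The plan is to prove the supremum statement by the standard parabolic maximum principle trick with a linear-in-time perturbation, and then obtain the infimum statement by applying the first part to $-f$. I would argue by contradiction: suppose $\sup_{\bar\Omega \times [0, T)} f > M := \max_{\bar\Omega} f(\cdot, 0)$, and fix $\tau \in (0, T)$ so that the strict inequality already holds on $[0, \tau]$. For $\epsilon > 0$ small enough, the perturbed function $f_\epsilon(x, t) := f(x, t) - \epsilon t$ still satisfies $\sup_{\bar\Omega \times [0, \tau]} f_\epsilon > M$, so by compactness it attains its maximum on $\bar\Omega \times [0, \tau]$ at some point $(x_0, t_0)$ with $t_0 > 0$. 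A direct computation gives $Q f_\epsilon = Qf - \epsilon$, since $\partial_t(\epsilon t) = \epsilon$ and $\epsilon t$ contributes nothing to $\Delta$ or $\nabla$.

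Next I would split into interior and boundary subcases. If $x_0 \in \Omega$, the spacetime max gives $\nabla f_\epsilon(x_0, t_0) = 0$ (hence $\nabla f(x_0, t_0) = 0$), $\Delta f_\epsilon \le 0$, and $\partial_t f_\epsilon \ge 0$, so $Qf_\epsilon(x_0, t_0) \ge 0$; the hypothesis at the critical point then yields $0 \le Qf_\epsilon = Qf - \epsilon \le -\epsilon$, a contradiction. If $x_0 \in \partial \bar\Omega$, at the boundary max the tangential derivatives of $f_\epsilon$ along $\partial\bar\Omega$ vanish, while $\langle \nabla f_\epsilon, \mu\rangle(x_0, t_0) \ge 0$ because $\mu$ is outward. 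Combined with the Neumann hypothesis $\langle \nabla f, \mu\rangle = \langle \nabla f_\epsilon, \mu\rangle \le 0$ this forces $\langle \nabla f, \mu\rangle(x_0, t_0) = 0$ and hence $\nabla f(x_0, t_0) = 0$, so the critical-point hypothesis $Qf \le 0$ is available at $(x_0, t_0)$. To finish, I would work in Fermi coordinates $(y, r)$ centered at $x_0$ with $r \ge 0$ the inward normal parameter: the map $r \mapsto f_\epsilon(y_0, r, t_0)$ has a max at $r=0$ with zero derivative, so its second $r$-derivative is $\le 0$; the tangential Hessian at $y_0$ is $\le 0$ since $x_0$ is a max of $f_\epsilon(\cdot, t_0)|_{\partial\bar\Omega}$; and the Christoffel corrections in $\Delta$ vanish at a critical point. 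This gives $\Delta f(x_0, t_0) \le 0$, and together with $\partial_t f_\epsilon \ge 0$ one concludes $Qf(x_0, t_0) \ge \epsilon > 0$, again contradicting $Qf \le 0$. The infimum statement follows by applying the proved sup statement to $-f$, since $Q$ is linear and both sign conditions reverse correctly.

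The main obstacle will be the boundary case, where the usual textbook maximum principle relies on Hopf's lemma; here one cannot invoke Hopf because $Qf \le 0$ is only assumed at spatial critical points rather than everywhere. The substitute is the observation above that the Neumann sign condition together with the geometric constraint at a boundary maximum forces $\nabla f = 0$ at $x_0$, turning $x_0$ into a genuine \emph{critical} point at which the hypothesis can be applied, after which the Fermi-coordinate computation supplies the required bound on $\Delta f$.
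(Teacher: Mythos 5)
Your proof is correct, and in the boundary case it takes a genuinely different (and in fact cleaner) route than the paper's. The paper perturbs exponentially, writing $\tilde f = e^{-\epsilon t} f$, while you perturb linearly with $f_\epsilon = f - \epsilon t$; this is a cosmetic difference, though your choice avoids having to reduce to the case of a positive maximum. The real divergence is at the boundary. The paper first establishes $\Delta\tilde f(x_0,t_0) > 0$, extends this to a spatial neighborhood by continuity, and then invokes the elliptic Hopf boundary point lemma [GT, Lemma 3.4], which is why it needs the interior sphere condition and hence the convexity of $\bar\Omega$. You instead observe that the boundary maximum condition gives $\langle\nabla f_\epsilon,\mu\rangle \ge 0$, which combined with the Neumann hypothesis $\langle\nabla f,\mu\rangle\le 0$ (and $\nabla f = \nabla f_\epsilon$) forces $\nabla f(x_0,t_0)=0$; then you compute $\Delta f(x_0,t_0)\le 0$ directly from the fact that the full Hessian of $f_\epsilon(\cdot,t_0)$ is negative semi-definite at $x_0$ — tangential second derivatives are $\le 0$ since $x_0$ maximizes the restriction to $\partial\bar\Omega$, the second derivative in the inward normal direction is $\le 0$ since the one-sided restriction along an inward geodesic has a one-sided max with vanishing first derivative, and the Christoffel terms drop because $\nabla f_\epsilon(x_0,t_0)=0$. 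This gives $Qf_\epsilon(x_0,t_0)\ge 0$, hence $Qf(x_0,t_0)\ge\epsilon > 0$, contradicting the hypothesis at the critical point. Two remarks: (i) the same observation that the Neumann condition forces $\nabla f(x_0,t_0)=0$ is also tacitly needed in the paper's proof (otherwise the critical-point hypothesis $Qf\le 0$ cannot be invoked there either), but the paper does not make it explicit; (ii) your route does not use convexity of $\bar\Omega$ at all, only smoothness of the boundary, so the statement you prove is slightly more general than the one the paper proves by way of Hopf.
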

	\begin{proof}
		This proof is an adaptation of Theorems 5.3.2 and 5.3.3 of \cite{H:11}. Suppose that we have an interior point $(x_0, t_0)\in {\rm int}\bar \Omega   \times(0, T)$ where the function $\tilde{f}=e^ {-\epsilon t}f$, for $\epsilon >0$ attains a positive maximum. In this case we have that $\nabla \tilde{f}(x_0, t_0)=0$ and that the Hessian matrix $\nabla ^ 2 \tilde{f}(x_0, t_0)$ is non-positive definite, hence $\Delta \tilde{f} (x_0, t_0)\le  0$. From $\nabla \tilde{f}=e^ {-\epsilon t}\nabla f$ and hypothesis
		$$ \frac{\partial \tilde{f}}{\partial t}(x_0, t_0)\leq Q\tilde{f}(x_0, t_0)=-\epsilon \tilde{f}(x_0, t_0)+e^ {-\epsilon t}Qf (x_0, t_0)\leq -\epsilon \tilde{f}(x_0, t_0)< 0,$$
		therefore $\tilde{f}$ is decreasing in time at any interior positive maximum point.
		
		Now suppose that we have a positive maximum point $(x_0, t_0) \in \partial \bar \Omega \times (0, T)$ non-decreasing in time. As above,
		\[
		Q\tilde{f}(x_0, t_0)\leq -\epsilon \tilde{f}(x_0, t_0)<0,
		\]
		whence	
		\[
		0\le \frac{\partial \tilde{f}}{\partial t}(x_0, t_0)< \Delta \tilde{f}(x_0, t_0).
		\]
		Then there exists a neighborhood $\mathcal{U}\subset \bar \Omega \times \{ t_0\}$ of $(x_0, t_0)$ such that 
		\[ \Delta \tilde{f}> 0 \,\,\, {\rm in} \,\, \mathcal{U}.
		\] Since the domain $\bar \Omega $ is convex with smooth boundary, and therefore satisfies the interior sphere condition at $x_0$, we can apply the well-known elliptic Hopf Lemma [Lemma 3.4, \ref{GT:01}] in order to achieve $\langle \nabla \tilde{f}(x_0, t_0), \mu \rangle >0$, which is a contradiction with the hypothesis. Therefore 
		\[
		\sup _{\bar \Omega \times [0, T)}f\leq e^ {\epsilon t}\max \{\max _{\bar \Omega }f(\cdot, 0) ,0 \}.
		\]
		Sending $\epsilon \to 0 $ we have
		\[
		\sup _{\bar \Omega \times [0, T)}f\leq \max \{\max _{\bar \Omega }f(\cdot, 0) ,0 \}.
		\]
		Setting $S=\max _{\bar \Omega }f(\cdot, 0)$ and applying the result we have just proved to $f+S+\epsilon $, with $\epsilon >0$ the above equation provides
		\[
		\sup _{\bar \Omega \times [0, T)}f\leq \max _{\bar \Omega }f(\cdot, 0) .
		\]
		The second claim of the lemma follows by applying the first one to $-f$.
	\end{proof}	
	
	Henceforth we will refer to the lemma above just as 
	{\it maximum principle}. The following lemma is an adaptation of [\ref{CK:04}, Theorem 4.4]
	
	\begin{lemma}\label{smp} 
		Let $f:\bar \Omega  \times [0, T)\to \mathbb{R}$ be a function twoce differentiable in space and once in time. Let $Q=\Big( \frac{\partial }{\partial t}-\Delta \Big) $, where $\Delta $ is the time-dependent Laplace-Beltrami operator. Suppose that $f$ satisfies 
		$$\begin{cases} 
		Q f \le  \langle Y, \nabla f\rangle + F(f), \, \mbox{ for all } \, (x, t)\in \bar \Omega \times [0, T) & \\ 
		\langle \nabla f, \mu \rangle \geq 0 \,\mbox{ for all } \, (x, t)\in \partial \bar \Omega \times [0,T), & 
		\end{cases} $$
		where $Y$ is a time-dependent vector field in $\bar \Omega  \times [0,T)$  and $F:\mathbb{R}\to \mathbb{R}$ is a locally Lipschitz function. Suppose there exists $c\in \mathbb{R}$ such that $f(x, 0)\leq c$ for all $x\in \bar \Omega $, and let $\gamma  $ be a solution to the associated ordinary differential equation 
		$$ \frac{d\gamma }{dt}=F(\gamma )$$ 
		satisfying $\gamma (0)=c.$  Then 
		\[
		f(x, t)\leq \gamma (t)
		\]
		for all $x\in \bar\Omega $ and $t\in [0, T)$ such that $\gamma  (t)$ exists. 
	\end{lemma}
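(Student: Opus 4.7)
The plan is to adapt the proof of Lemma \ref{wmp} to an ODE-comparison setting, combining the interior maximum principle with Hopf's boundary point lemma. For $\epsilon>0$, I would introduce the perturbed ODE
$$\gamma_\epsilon'(t) = F(\gamma_\epsilon(t)) + \epsilon, \qquad \gamma_\epsilon(0) = c + \epsilon,$$
which by continuous dependence on parameters exists on any compact subinterval $[0,T_0]$ on which $\gamma$ is defined (for $\epsilon$ small), with $\gamma_\epsilon\to\gamma$ uniformly as $\epsilon\to 0$ and $\gamma_\epsilon>\gamma$ throughout. The goal is to prove $v_\epsilon := f-\gamma_\epsilon<0$ on $\bar\Omega\times[0,T_0]$ and let $\epsilon\to 0$.

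By construction $v_\epsilon(\cdot,0)\le -\epsilon$, and since $\gamma_\epsilon$ is spatially constant,
$$Qv_\epsilon \le \langle Y,\nabla v_\epsilon\rangle + F(f)-F(\gamma_\epsilon) - \epsilon, \qquad \langle \nabla v_\epsilon,\mu\rangle = \langle \nabla f,\mu\rangle \text{ on } \partial\bar\Omega.$$
Assume for contradiction that $v_\epsilon$ attains a nonnegative value; then there is a first time $t_0>0$ and a point $x_0\in\bar\Omega$ with $v_\epsilon(x_0,t_0)=0$ and $v_\epsilon\le 0$ on $\bar\Omega\times[0,t_0]$. At this touching point $f=\gamma_\epsilon$, so $F(f)-F(\gamma_\epsilon)=0$, and $\partial_t v_\epsilon(x_0,t_0)\ge 0$. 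The inequality above collapses to $Qv_\epsilon(x_0,t_0) \le \langle Y,\nabla v_\epsilon(x_0,t_0)\rangle - \epsilon$.

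If $x_0$ lies in the interior of $\bar\Omega$, then $\nabla v_\epsilon(x_0,t_0)=0$ and $\Delta v_\epsilon(x_0,t_0)\le 0$ (as $x_0$ is a spatial maximum), so $Qv_\epsilon\ge 0$ while the preceding inequality gives $Qv_\epsilon\le -\epsilon<0$, a contradiction. If instead $x_0\in\partial\bar\Omega$, rearranging the strict inequality together with $\partial_t v_\epsilon(x_0,t_0)\ge 0$ yields $\Delta v_\epsilon(x_0,t_0)+\langle Y,\nabla v_\epsilon(x_0,t_0)\rangle \ge \epsilon>0$, so by continuity $v_\epsilon(\cdot,t_0)$ is a strict subsolution of the second-order elliptic operator $L = \Delta + \langle Y,\nabla\,\cdot\,\rangle$ in a neighborhood of $x_0$ in $\bar\Omega\times\{t_0\}$, attaining its maximum at $x_0$. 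Since $\bar\Omega$ is convex, the interior sphere condition holds at $x_0$, and the classical Hopf boundary point lemma forces $\langle \nabla v_\epsilon,\mu\rangle(x_0,t_0)>0$, contradicting the boundary hypothesis on $\langle\nabla f,\mu\rangle$.

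The main obstacle is the boundary case, where one cannot reduce directly to Lemma \ref{wmp}: the right-hand side of the parabolic inequality is not immediately nonpositive at the touching point, and the drift $\langle Y,\nabla v_\epsilon\rangle$ spoils a naive argument. The key observation is that $F(f)-F(\gamma_\epsilon)$ vanishes precisely at the first touching, so the perturbation $\epsilon$ survives and supplies the strict ellipticity needed to activate Hopf; the local Lipschitz hypothesis on $F$ is used only implicitly through the existence and continuous dependence of $\gamma_\epsilon$. Finally, sending $\epsilon\to 0$ in $f<\gamma_\epsilon$ on $\bar\Omega\times[0,T_0]$ yields $f\le\gamma$, and since $T_0$ is arbitrary within the existence interval of $\gamma$ the conclusion follows.
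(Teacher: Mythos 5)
Your argument takes a genuinely different route from the paper's. The paper fixes $\tilde t\in[0,T)$, uses the local Lipschitz bound on $F$ over a compact range to control $F(\gamma)-F(f)$ by $L_{\tilde t}|\gamma-f|$, introduces the exponential correction $J=e^{-D_{\tilde t}t}(\gamma-f)$ so that $QJ\ge\langle Y,\nabla J\rangle$, and then reduces to a direct invocation of Lemma~\ref{wmp}. You instead perturb the ODE by $\epsilon$ and run a first-touching contradiction, handling the boundary with Hopf's lemma directly. Both are standard and legitimate; yours is a bit more geometric, exploits the local Lipschitz hypothesis only through continuous dependence for ODEs and the vanishing of $F(f)-F(\gamma_\epsilon)$ at the touching point, and avoids the explicit Lipschitz constant — at the cost of re-deriving the Hopf step that the paper has already packaged into Lemma~\ref{wmp}.

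There is, however, a genuine sign problem in your boundary case. Since $\gamma_\epsilon$ is spatially constant, $\langle\nabla v_\epsilon,\mu\rangle=\langle\nabla f,\mu\rangle$ on $\partial\bar\Omega$, and the lemma's stated boundary hypothesis is $\langle\nabla f,\mu\rangle\ge 0$. Hopf's lemma produces $\langle\nabla v_\epsilon,\mu\rangle(x_0,t_0)>0$, which is perfectly compatible with $\ge 0$: you have not obtained a contradiction. Your argument closes only if the hypothesis is $\langle\nabla f,\mu\rangle\le 0$. That is, in fact, the sign the lemma needs in order to be true — on $\Omega=[-1,1]$ with $Y=0$, $F\equiv 0$, $c=1$, the function $f(x,t)=x^2+t$ satisfies $Qf=-2\le 0$ and $\langle\nabla f,\mu\rangle=2|x|\ge 0$ on the boundary, yet $f(1,t)=1+t>\gamma(t)\equiv 1$ for $t>0$. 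The paper's own proof also implicitly requires $\le 0$: applying the infimum half of Lemma~\ref{wmp} to $J$ needs $\langle\nabla J,\mu\rangle\ge 0$, i.e.\ $\langle\nabla f,\mu\rangle\le 0$. The discrepancy is invisible in the paper's application since there $\langle\nabla h,\mu\rangle=0$. You should correct the sign of the hypothesis explicitly rather than assert a contradiction that does not follow from the statement as written.
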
 
	
	\begin{proof}
		Let $\tilde{t}\in [0, T).$ Since $\bar \Omega $ is compact, then there exists a constant $C_{\tilde{t}}<\infty $ such that 
		\begin{center}
			$|f(x, t)|\le C_{\tilde{t}}$ and $|\gamma (t)|\le C_{\tilde{t}}$  $\forall (x, t)\in \bar \Omega \times [0, \tilde{t}].$
		\end{center}
		Since $F$ is locally Lipschitz, there exists a constant $L_{\tilde{t}}<\infty $ such that 
		\begin{center}
			$|F(a)-F(b)|\le L_{\tilde{t}}|a-b|$   $\forall a, b\in [-C_{\tilde{t}}, C_{\tilde{t}}].$
		\end{center}
		Notice that
		\begin{eqnarray}\label{heat-gamma-f}
		Q(\gamma -f)=-Qf+\dfrac{d\gamma }{dt}&\ge &\langle Y, \nabla f\rangle +F(\gamma )-F(f)\nonumber \\
		&\ge &\langle Y, \nabla (\gamma -f)\rangle -sign (\gamma -f)L_{\tilde{t}} (\gamma - f),
		\end{eqnarray}
		where $sign (\gamma -f)$ is $-1, 0, 1$ depending whether $\gamma -f$ is negative, zero or positive, respectively.
		
		Now we set $D_{\tilde{t}}=-L_{\tilde{t}}sign (\gamma - f)$ and define 
		\[
		J(x, t):=e^ {-D_{\tilde{t}}t}(\gamma -f).
		\]
		We compute using \eqref{heat-gamma-f}
		\[
		QJ =  e^ {-D_{\tilde{t}}t}Q(\gamma -f )-D_{\tilde{t}}J \\
		\ge e^ {-D_{\tilde{t}}t} \langle Y, \nabla (\gamma - f)\rangle =\langle Y, \nabla J\rangle .
		\]
		We can thus apply Lemma \ref{wmp} to $J$ and achieve, since $\gamma -f\ge 0$ at $t=0$ from hypothesis,
		\[
		J(x, t)\ge \min _{\bar \Omega }J(x, 0)\geq 0 \,\,\, \forall (x, t)\in \bar \Omega \times [0, \tilde{t}],
		\]
		what means that 
		\[
		f(x, t)\le \gamma (t) \,\,\, \forall (x, t)\in \bar \Omega \times [0, \tilde{t}].
		\]
		Since $\tilde{t}$ was arbitrary, we finish the proof.
	\end{proof}
	
	\subsection{\emph{A priori} $C^0$ bounds}  \label{gradient} 
	
	
	The next two results provide $C^0$ \emph{a priori} estimates to \eqref{F2}.
	
	\begin{lemma}
		Suppose that \eqref{max-princ} and \eqref{HM} hold. Then
		\begin{equation}
		\label{height-1}
		\min_{\Sigma_0} u + C_+ nt \le u (x, t)\le \max_{\Sigma_0} u + C_- nt  \,\,\, {\rm for\,\, all} \,\,\ (x, t)\in \bar\Omega \times [0,T].
		\end{equation}
		This estimate can be refined as
		\begin{equation}
		\label{height-2}
		\frac{C_+}{C_-} \min_{\Sigma_0} u +  C_+ n t\le u (x, t) \le  \frac{C_-}{C_+}\max_{\Sigma_0} u +  C_- n t  \,\,\, {\rm for\,\, all} \,\,\ (x, t)\in \bar\Omega \times [0,T].
		\end{equation}
	\end{lemma}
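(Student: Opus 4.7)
My plan is to establish \eqref{height-1} by a direct application of the parabolic maximum principle (Lemma \ref{wmp}) to affine test functions, and then to deduce the refined form \eqref{height-2} from Lemma \ref{smp} via a scalar ODE comparison whose integration brings in the potential $\kappa$ of Lemma \ref{ev}.

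For \eqref{height-1}, I would set $f(x,t) := u(x,t) - C_- n t$. By \eqref{heat-s} and the fact that $s = u$ along $\Sigma_t$,
\[
Qu = \frac{\rho'(u)}{\rho(u)}\bigl(n + |\nabla s|^2\bigr).
\]
Since $(\rho'/\rho)' \le 0$ by \eqref{max-princ} and $C_- = \limsup_{s \to s_-} \rho'/\rho$, the quotient $\rho'/\rho$ is bounded above by $C_-$ and below by $C_+$ throughout $I$. At any point where $\nabla f = \nabla u = 0$ the gradient term drops and $Qf = n(\rho'/\rho - C_-) \le 0$. On the lateral boundary, \eqref{nablarho} combined with $\rho' > 0$ (a consequence of $C_+ > 0$) forces $\langle \nabla s, \mu\rangle = 0$, so $\langle \nabla f, \mu\rangle = 0$. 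Lemma \ref{wmp} then yields $f \le \max_{\Sigma_0} u$, the upper estimate. The lower estimate is the mirror argument with $g := u - C_+ n t$ and the second half of Lemma \ref{wmp}, using $\rho'/\rho \ge C_+$.

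For \eqref{height-2}, I would apply Lemma \ref{smp} to $u$ itself after the decomposition
\[
Qu = n\,\frac{\rho'(u)}{\rho(u)} + \frac{\rho'(u)}{\rho(u)}|\nabla u|^2 \le \bigl\langle C_-\,\nabla u,\, \nabla u\bigr\rangle + n\,\frac{\rho'(u)}{\rho(u)},
\]
which fits the template $Qu \le \langle Y, \nabla u\rangle + F(u)$ with $Y = C_- \nabla u$ and $F(u) = n\rho'(u)/\rho(u)$. The Neumann condition $\langle \nabla u, \mu\rangle = 0$ is as before, so Lemma \ref{smp} delivers $u(x,t) \le \gamma(t)$, where $\gamma$ solves $\gamma'(t) = n\rho'(\gamma)/\rho(\gamma)$ with $\gamma(0) = \max_{\Sigma_0}u$. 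This ODE separates as $\kappa(\gamma(t)) = \kappa(\gamma(0)) + nt$ (taking $C_0 = 1$ and $s_0 = 0$ in the definition of $\kappa$ from Lemma \ref{ev}). Because $\kappa'(s) = \rho(s)/\rho'(s) \in [1/C_-, 1/C_+]$, one has $\kappa(s) \le s/C_+$ and $\kappa^{-1}(y) \le C_- y$ for $s, y \ge 0$, and composing these inequalities gives
\[
\gamma(t) \le C_-\!\Big(\tfrac{1}{C_+}\max_{\Sigma_0} u + n t\Big) = \tfrac{C_-}{C_+}\max_{\Sigma_0} u + C_- n t.
\]
The lower bound follows symmetrically: from $Qu \ge n\rho'(u)/\rho(u)$ and the reverse form of Lemma \ref{smp} (applied to $-u$) one obtains $u \ge \tilde\gamma(t)$ with $\tilde\gamma(0) = \min_{\Sigma_0}u$ solving the same ODE, and then the matching inequalities $\kappa(s) \ge s/C_-$, $\kappa^{-1}(y) \ge C_+ y$ on $y \ge 0$ yield $\tilde\gamma(t) \ge \tfrac{C_+}{C_-}\min_{\Sigma_0}u + C_+ n t$.

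The step I expect to demand the most bookkeeping is the translation between estimates on $\kappa(u)$ and estimates on $u$: the two endpoints $C_\pm$ of the range of $\rho'/\rho$ play asymmetric roles depending on the sign of the argument, so arriving at the precise constants of \eqref{height-2} requires both the natural normalization $s_0 = 0$ and the implicit positivity of $u_0$ (so that $\kappa(u)$ stays nonnegative); in less favorable configurations one would pick up an additional additive constant proportional to $s_0$. The intrinsic softness of this bound (compared to the sharper \eqref{height-1}) is exactly the price of writing the estimate in a form that separates the multiplicative dependence on $\max_{\Sigma_0}u$ from the additive $C_\pm n t$ growth.
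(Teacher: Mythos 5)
Your argument is correct and reaches the same constants. For \eqref{height-1} your route is essentially the paper's: the paper also starts from $Qs = \tfrac{\rho'}{\rho}(n + |\nabla s|^2)$, absorbs the gradient term into a transport term, and applies Lemma \ref{wmp} together with $\langle\nabla s,\mu\rangle = 0$; your version checks $Qf\le 0$ only where $\nabla f=0$, which is exactly the stated hypothesis of Lemma \ref{wmp}, so the difference is purely cosmetic.

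For \eqref{height-2} you genuinely diverge from the paper. The paper computes the evolution equation \eqref{heat-kappa} for $\kappa(s)$, writes it as $Q\kappa + C_0\tfrac{\rho''}{\rho}\langle\nabla\kappa,\tfrac{\partial}{\partial s}\rangle = C_0 n$, applies Lemma \ref{wmp} (plus \eqref{nablakappa}) to get $\min_{\Sigma_0}\kappa + C_0 n t \le \kappa \le \max_{\Sigma_0}\kappa + C_0 n t$, and then converts back to $u$ via $\tfrac{C_0}{C_-}s \le \kappa(s)\le \tfrac{C_0}{C_+}s$. You instead skip \eqref{heat-kappa} entirely: you put $Qu$ directly in the form of Lemma \ref{smp} with $F(u)=n\rho'(u)/\rho(u)$ and $Y=C_-\nabla u$, and let the separable ODE $\gamma'=F(\gamma)$ reproduce $\kappa$ as its integrating factor. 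The two are mathematically equivalent — in both cases $\kappa$ is the potential that linearizes the height evolution — but yours keeps the comparison entirely at the ODE level, and the paper's keeps it at the PDE level by working with $\kappa(u)$ as a new unknown. Your approach has the mild advantage of not needing to verify that the transport term $C_-\nabla u$ in the lower bound is harmless (it simply drops out since the $|\nabla u|^2$ term has the favorable sign there, so $Y=0$ suffices), and of making the dependence on the ODE comparison principle explicit. You also correctly flag the hidden normalization $s_0 = 0$ and $u_0 > 0$ that both $\kappa(s)\le s/C_+$ and $\kappa^{-1}(y)\le C_- y$ require; the paper uses the same inequalities without comment, so this is a shared (and minor) gap rather than one in your argument.
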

	
	\begin{proof} We have from \eqref{heat-s}  that
		\[
		Qs = n \frac{\rho '}{\rho }- n \frac{\rho ' }{\rho} \Big\langle \nabla s, \frac{\partial}{\partial s} \Big\rangle  \le C_- n  - n\frac{\rho ' }{\rho} \Big\langle \nabla s, \frac{\partial}{\partial s} \Big\rangle.
		\]
		Therefore
		\[
		Qs +n\frac{\rho ' }{\rho}\Big\langle \nabla s, \frac{\partial}{\partial s} \Big\rangle \le C_- n.
		\]
		In the same way we have
		\[
		Qs +n\frac{\rho ' }{\rho}\Big\langle \nabla s, \frac{\partial}{\partial s} \Big\rangle \ge C_+ n.
		\]
		Since $\langle \nabla s, \mu\rangle =0$ along $\partial\Sigma_t$ we conclude from the maximum principle that 
		\[
		\min_{\Sigma_0} u + C_+ nt \le u \le \max_{\Sigma_0} u + C_- nt
		\]
		Now, we have from \eqref{heat-kappa}   that
		\[
		Q\kappa = C_0n + C_0 \frac{\rho ''}{\rho} \frac{\rho}{\rho ' } |\nabla s|^2 = C_0n + C_0 \frac{\rho ''}{\rho} \langle \nabla \kappa, \nabla s\rangle
		\]
		Therefore
		\[
		Q\kappa  + C_0 \frac{\rho ''}{\rho} \Big\langle \nabla \kappa, \frac{\partial}{\partial s}\Big\rangle =C_0 n.
		\]
		Using \eqref{nablakappa} we conclude from the maximum principle that 
		\begin{equation*}
			\min_{\Sigma_0} \kappa + C_0 n t\le \kappa  \le \max_{\Sigma_0} \kappa + C_0 n t
		\end{equation*}
		Since
		\[
		\frac{C_0}{C_-}s \le \kappa(s) \le \frac{C_0}{C_+} s
		\]
		we conclude that
		\begin{equation*}
			\frac{C_+}{C_0} \min_{\Sigma_0} \kappa +  C_+ n t\le u  \le \frac{C_-}{C_0}\max_{\Sigma_0} \kappa +  C_- n t .
		\end{equation*}
		Therefore,
		\begin{equation*}
			\frac{C_+}{C_-} \min_{\Sigma_0} u +  C_+ n t\le u  \le \frac{C_-}{C_+}\max_{\Sigma_0} u +  C_- n t .
		\end{equation*}
		
		%
		%
		%
		This finishes the proof. 
	\end{proof}

	\begin{prop}
		Suppose that \eqref{max-princ} and \eqref{HM} hold. Then
		\begin{equation}
		\label{est-rho}
		e^{C_+^2 n t} \min_{\Sigma_0} \rho \le \rho (x, t)\le  e^{C_-^2 n t}\max_{\Sigma_0} \rho, \,\,\, {\rm for all} \,\,\ (x, t)\in \bar\Omega \times [0,T].
		\end{equation}
	\end{prop}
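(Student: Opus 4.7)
The plan is to mimic the scheme of the preceding height estimate: derive an evolution identity for $\rho = \rho(u(x,t))$ along $\Sigma_t$, then apply the maximum principle (Lemma \ref{wmp}) to a suitably time-scaled version of $\rho$. First I would compute $Q\rho$ by the chain rule, $Q\rho = \rho'(u)\,Qs - \rho''(u)|\nabla s|^2$, substitute \eqref{heat-s}, and regroup the coefficient of $|\nabla s|^2$ using $(\rho'/\rho)' = (\rho''\rho - (\rho')^2)/\rho^2$. This gives the clean identity
\[
Q\rho = n\frac{(\rho')^2}{\rho} \;-\; \rho\,\Bigl(\frac{\rho'}{\rho}\Bigr)' |\nabla s|^2.
\]

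Next I would introduce the scaled functions $f^-(x,t) := e^{-C_-^2 n t}\rho$ for the upper bound and $f^+(x,t) := e^{-C_+^2 n t}\rho$ for the lower bound. A preparatory observation is required: the monotonicity of $\rho'/\rho$ granted by \eqref{max-princ}, combined with $C_+ > 0$ from \eqref{HM}, forces $0 < C_+ \le \rho'/\rho \le C_-$ throughout the entire range of $u$; in particular $\rho' > 0$ everywhere. Consequently, at any spatial critical point of $f^\pm$ one has $\rho'(u)\nabla s = \nabla\rho = 0$, so $\nabla s = 0$ there and the term involving $|\nabla s|^2$ drops out of $Q\rho$. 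At such points a direct computation gives
\[
Qf^\pm = e^{-C_\pm^2 n t}\,n\rho\,\Bigl(\bigl(\tfrac{\rho'}{\rho}\bigr)^2 - C_\pm^2\Bigr),
\]
which is $\le 0$ for $f^-$ and $\ge 0$ for $f^+$. The boundary condition is automatic: \eqref{nablarho} in Lemma \ref{boundary} gives $\langle \nabla \rho,\mu\rangle = 0$ on $\partial\bar\Omega\times [0,T)$, hence $\langle \nabla f^\pm,\mu\rangle = 0$ there as well. Both directions of Lemma \ref{wmp} then yield $\sup f^- \le \max_{\Sigma_0}\rho$ and $\inf f^+ \ge \min_{\Sigma_0}\rho$, which rearrange to exactly \eqref{est-rho}.

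The only subtlety is the extra term $-\rho(\rho'/\rho)'|\nabla s|^2$ in $Q\rho$: because of \eqref{max-princ} it is non-negative, i.e.\ it has the wrong sign for obtaining an upper bound on $Q\rho$ directly as a drift-plus-reaction expression in the style of Lemma \ref{smp}. The device of working at critical points of the scaled function $f^-$ is precisely what allows this term to be discarded, and that step depends crucially on knowing that $\rho' > 0$, which is exactly why the condition $C_+ > 0$ is built into \eqref{HM}. Once $\rho' > 0$ is in hand, the argument reduces to the same routine application of the maximum principle already used for \eqref{height-1} and \eqref{height-2}.
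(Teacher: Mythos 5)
Your proof is correct and follows essentially the same route as the paper's: compute $Q\rho$ via \eqref{heat-s}, scale by $e^{-C_\pm^2 n t}$, check boundary derivatives via \eqref{nablarho}, and invoke Lemma \ref{wmp}. The only difference is cosmetic: you make explicit the observations that $\rho'>0$ (from $C_+>0$ and monotonicity of $\rho'/\rho$) and that the $|\nabla s|^2$ term drops out at spatial critical points, whereas the paper absorbs it into a drift term proportional to $\langle\nabla\rho,\partial_s\rangle$ before quoting the maximum principle.
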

	
	\begin{proof} We have from \eqref{heat-s}
		\begin{align*}
			Q \rho = \rho' Qs - \rho '' |\nabla s|^2 =n \frac{\rho'^2}{\rho} +\rho\bigg(\frac{\rho'^2}{\rho^2}  -\frac{ \rho '' }{\rho} \bigg)|\nabla s|^2
		\end{align*}
		from what follows that
		\begin{align*}
			Q\rho + \frac{\rho}{\rho'}\bigg(\frac{\rho'^2}{\rho^2}  -\frac{ \rho '' }{\rho} \bigg)\Big\langle \nabla \rho, \frac{\partial}{\partial s}\Big\rangle = n \frac{\rho'^2}{\rho^2}\rho. 
		\end{align*}
		Hence, \eqref{HM} implies that
		\begin{align*}
			C_+^2 n \rho \le Q\rho + \frac{\rho}{\rho'}\bigg(\frac{\rho'^2}{\rho^2}  -\frac{ \rho '' }{\rho} \bigg)\Big\langle \nabla \rho, \frac{\partial}{\partial s}\Big\rangle\le   C_-^2n\rho. 
		\end{align*}
		Using \eqref{nablarho} we conclude from the maximum principle that 
		\[
		e^{-C_-^2 n t}\rho\le \max_{\Sigma_0} \rho.
		\]
		and
		\[
		\min_{\Sigma_0} \rho \le e^{-C_+^2 n t}\rho
		\]
		Therefore
		\begin{equation}
		e^{C_+^2 n t} \min_{\Sigma_0} \rho \le \rho\le  e^{C_-^2 n t}\max_{\Sigma_0} \rho. 
		\end{equation}
		This finishes the proof. 
	\end{proof}

	\subsection{A priori $C^1$ estimates} In this section we exhibit two variants of \emph{a priori} gradient estimates for solutions of \eqref{F2}. Before presenting the results we make the below remark that will be useful along the performing of our estimates.
	
	\begin{remark}\label{tcc}
		Given a vector field $Y$ on $N$,  we have the following relation between the Ricci tensor $\operatorname{Ric}$ of the leaf $M$ and the Ricci tensor 
		$\overline{\Ric}$ of the ambient:
		\begin{align}
			\label{riccii}
			\overline{\Ric}(Y, Y) =  n\frac{\rho ''}{\rho }\langle Y, Y\rangle+ \Ric(Y^M , Y^M) - (n-1)\Big(\frac{\rho ' }{\rho}\Big) ' \langle Y^M, Y^M\rangle,
		\end{align}
		where $Y^M = Y + \langle Y, \frac{\partial}{\partial s}\rangle \frac{\partial}{\partial s}$ and
		\[
		\langle Y^M, Y^M\rangle =\rho^2 \sigma(Y^M, Y^M).
		\]
		We refer the reader to [\ref{O'N:83}, Corollary 7.43]  for further details. In particular, applying the equation \eqref{riccii} to any light-like vector field one can easily check that the validity of the null convergence condition is equivalent to have the following inequation verified
		\begin{equation*}
			\Ric - (n-1) \rho^2\Big(\frac{\rho ' }{\rho}\Big) ' \sigma\ge 0.
		\end{equation*}
		Therefore a consequence of the validity of the null convergence condition is
		\begin{equation}
		\label{TCC}
		\overline{\operatorname{Ric}}(\nu , \nu )+n\frac{\rho ''}{\rho }\ge 0.
		\end{equation}
	\end{remark}
	
	\begin{lemma}
		Suppose that \eqref{max-princ} and \eqref{HM} hold and that $N$ obeys the null convergence condition. Then 
		\begin{equation}
		\label{est-Du-0}
		|Du|(x,t)\le C(n, \max_{\Sigma_0} |H|,  \max_{\Sigma_0}\Theta, \max_{\Sigma_0} \phi, \min_{\Sigma_0} \rho, C_-, T)\rho,
		\end{equation}
		for all $(x,t) \in \bar\Omega \times [0, T]$, where the constant in the right-hand side is always strictly less than $1$.
	\end{lemma}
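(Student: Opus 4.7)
The plan is to bound $\Theta/\rho = \rho/W = \cosh\alpha$ from above, since the definition of $W$ together with \eqref{thetaW}--\eqref{thetarho} gives the identity $|Du|^2/\rho^2 = 1 - (\rho/\Theta)^2$. Hence any uniform upper bound $\Theta/\rho \leq M < \infty$ translates directly into $|Du|/\rho \leq \sqrt{1-1/M^2} < 1$, which is the desired estimate with constant strictly less than one. To obtain such a bound I would apply the maximum principle Lemma \ref{wmp} to the test function
\[
f = \log(\Theta/\rho) = \log\Theta - \log\rho,
\]
which is non-negative since $\Theta \geq \rho$. The key virtue of the logarithm (rather than $\Theta/\rho$ itself) is that the identity $Q\log h = Qh/h + |\nabla h|^2/h^2$ produces a quadratic gradient term in both pieces, and at any critical point $\nabla\log\Theta = \nabla\log\rho$ forces $|\nabla\Theta|^2/\Theta^2 = |\nabla\rho|^2/\rho^2$, so those two terms cancel exactly, leaving simply $Qf|_{\nabla f = 0} = Q\Theta/\Theta - Q\rho/\rho$.

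To show this quantity is non-positive at critical points I would bound each piece separately. Starting from \eqref{heat-Theta} and applying NCC in the form \eqref{TCC} to cancel $\overline{\Ric}(\nu,\nu) + n\rho''/\rho$, one obtains $Q\Theta \leq 2\rho'H - \Theta|A|^2$. Combining Cauchy--Schwarz $H^2 \leq n|A|^2$ with the AM--GM inequality
\[
2\rho'H \leq 2\rho'\sqrt{n}\,|A| \leq \Theta|A|^2 + n\rho'^2/\Theta
\]
(which uses $\rho' \geq 0$ from \eqref{max-princ}) makes the Weingarten term cancel, yielding $Q\Theta \leq n\rho'^2/\Theta$. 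For the $\rho$-term a direct computation from \eqref{heat-s} gives
\[
Q\rho = \frac{n\rho'^2}{\rho} + \Bigl(\frac{\rho'^2}{\rho} - \rho''\Bigr)|\nabla s|^2,
\]
and the hypothesis $(\rho'/\rho)' \leq 0$ in \eqref{max-princ}, which is equivalent to $\rho\rho'' \leq \rho'^2$, forces the second term to be non-negative, giving $Q\rho/\rho \geq n\rho'^2/\rho^2$. Since $\Theta \geq \rho$,
\[
Qf\big|_{\nabla f = 0} \leq \frac{n\rho'^2}{\Theta^2} - \frac{n\rho'^2}{\rho^2} \leq 0.
\]

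The boundary condition is immediate from Lemma \ref{boundary}: \eqref{nablarho} and \eqref{nablatheta} combine to give
\[
\langle \nabla f, \mu\rangle = \frac{\langle \nabla \Theta,\mu\rangle}{\Theta} - \frac{\langle \nabla \rho,\mu\rangle}{\rho} = -II^K(\nu,\nu) \leq 0
\]
by the convexity of $K$ (Remark \ref{rem-conv}). Lemma \ref{wmp} then yields $\Theta/\rho \leq \max_{\Sigma_0}(\Theta/\rho) \leq \max_{\Sigma_0}\Theta/\min_{\Sigma_0}\rho$ uniformly in $t$, and translating through $|Du|^2 = \rho^2(1 - (\rho/\Theta)^2)$ completes the argument. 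The main obstacle I anticipate is the double cancellation that makes the whole approach work: the AM--GM step on $2\rho'H$ has to be sharp enough for the $\Theta|A|^2$ contributions to cancel exactly, and the squared-gradient terms coming from the logarithm must cancel at critical points. Recognizing that $\log(\Theta/\rho)$ (rather than $\Theta$, $\Theta/\rho$, or $\Theta - \lambda\phi$) is the correct test function is precisely what produces both cancellations at once; the simultaneous use of NCC and \eqref{max-princ} is what kills the remaining curvature terms with the right signs.
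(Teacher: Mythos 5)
Your proof is correct, but it follows a genuinely different --- and in fact sharper --- route than the paper's. The paper first derives a time-dependent bound $|H|\le C_1 = e^{nC_-^2 T}\max_{\Sigma_0}|H|$ from \eqref{heat-H}, then applies the maximum principle to the auxiliary function $E\Theta - \phi$ with $E\le n/(2C_1)$ chosen so that $Q(E\Theta-\phi)\le (2C_1E-n)\rho'\le 0$; the resulting gradient bound depends on $T$ both through $C_1$ and through $\max_{\bar\Omega\times[0,T]}\phi$. You instead work directly with $\Theta/\rho$ and, crucially, do not discard the $-\Theta|A|^2$ term coming from \eqref{heat-Theta}: you use $H^2\le n|A|^2$ and the AM--GM inequality $2\sqrt{n}\rho'|A|\le n\rho'^2/\Theta + \Theta|A|^2$ to absorb $2\rho'H$ entirely, after which the residual $n\rho'^2(1/\Theta^2-1/\rho^2)$ is non-positive because $\Theta\ge\rho$. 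This yields the time-\emph{independent} bound $\Theta/\rho\le\max_{\Sigma_0}(\Theta/\rho)$ and hence $|Du|\le\rho\sqrt{1-1/M_0^2}$ with $M_0$ determined by initial data alone (and comfortably within the allowed list of dependencies). Your argument is actually closest in spirit to the paper's Proposition \ref{est-grad-2}, which also uses $\theta=\Theta/\rho$ but simply drops the $-\theta|A|^2$ term and thereby ends up with the linearly-growing constant $\mathcal M=2C_-C_1T+\max_{\Sigma_0}\theta$; your AM--GM step is precisely what removes that growth. One small remark: the logarithm is not essential to the cancellation --- the same bookkeeping works with $\theta=\Theta/\rho$ itself, since the term $\frac{2}{\rho}\langle\nabla\theta,\nabla\rho\rangle$ vanishes at critical points of $\theta$ and Lemma \ref{wmp} only requires $Q\theta\le 0$ there --- but $\log$ makes the cancellation of the first-order terms particularly transparent. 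The boundary inequality $\langle\nabla f,\mu\rangle=-II^K(\nu,\nu)\le 0$ from \eqref{nablarho}, \eqref{nablatheta} and Remark \ref{rem-conv} is exactly as you state.
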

	
	\begin{proof}
		%
		Note that \eqref{max-princ},
		\eqref{heat-H}, \eqref{TCC} imply that
		\begin{align*}
			QH^2 = -2H^2 (|A|^2+ \overline{\rm Ric} (\nu, \nu)) - 2|\nabla H|^2\le  2n\frac{\rho '' }{\rho}H^2 \le 2n\frac{\rho'^2 }{\rho^2}H^2  \le 2nC_-^ 2 H^2 
		\end{align*}
		for all $t\in [0,T]$. Hence from the maximum principle
		\begin{equation*}
			\label{sub-H2}
			Q(e^{-2nC_-^ 2t} H^2) \le 0.
		\end{equation*}
		Moreover one has from \eqref{nablaH2} and Remark \ref{rem-conv} that
		\[ 
		\langle \nabla H^2, \mu\rangle = -2H^2 II^K (\nu, \nu)\le 0,
		\]
		what implies again for the maximum principle that 
		\begin{equation*}
			H^2\leq  \max_{\Sigma_0} H^2 \cdot e^ {2nC_-^ 2T}=:C_1^ 2 \,\,\, {\rm in } \,\, \bar\Omega \times [0,T].
		\end{equation*}
		Then we have
		\begin{equation}
		\label{bound-H}
		|H| \le C_1 \,\, \mbox{ in } \,\, \bar \Omega  \times [0,T].
		\end{equation}
		Set $0<E \le \frac{n}{2C_1}$. Hence, we have from \eqref{heat-phi}, \eqref{heat-Theta} and \eqref{TCC} that
		\begin{eqnarray*}
			Q (E\Theta - \phi) &= &2E\rho ' H- E\Theta \left(|A|^2+{\rm \overline{Ric}}(\nu,\nu)+n\frac{\rho ''}{\rho }\right) -n\rho'\\
			& \le &(2C_1E-n )\rho' \le 0 .
		\end{eqnarray*}
		From \eqref{nablarho} and \eqref{nablatheta} we get
		\begin{equation*}
			\langle \nabla (E\Theta-\phi), \mu\rangle = -E\Theta II^K (\nu, \nu)\le 0.
		\end{equation*}
		Therefore the maximum principle and \eqref{max-princ} implies that 
		\begin{eqnarray}
		\label{gradientestimate} 
		E\Theta  \le \max_{\Sigma_0} (E\Theta-\phi)+\max _{\bar \Omega \times [0, T]}\phi =:M \,\,\, {\rm in}\,\,\, \bar \Omega \times [0, T].
		\end{eqnarray} 
		It follows from \eqref{thetaW} that
		\[
		\frac{\rho^2}{W} \le \frac{M}{E}.
		\]
		Hence
		\[
		|Du|^2 \le \rho^2 \Big(1-\frac{E^2\rho^2}{M^2}\Big).
		\]
		On the other hand, \eqref{thetarho} implies that
		\[
		E\rho\leq E\Theta \leq M .
		\] 
		One concludes from \eqref{est-rho} that 
		\begin{equation}
		\label{est-W}
		|Du| \le \rho \Big( 1- \frac{E^2}{M^2}e^{2C_+^2 n t} \min_{\Sigma_0} \rho^2\Big)^{\frac{1}{2}}.
		\end{equation}
		In this way we obtained a (time-dependent) gradient estimate of the form
		\begin{equation}
		\label{est-Du}
		|Du| \le C(n, \max_{\Sigma_0} |H|,  \max_{\Sigma_0}\Theta, \max_{\Sigma_0} \phi, \min_{\Sigma_0} \rho, C_+, T)\rho.
		\end{equation}
		where the constant in the right-hand side is always strictly less than $1$. This finishes the proof.
	\end{proof}
	
	A variant of this gradient estimate is written in the following proposition.

	\begin{prop}
		\label{est-grad-2}
		Suppose that \eqref{max-princ} and  \eqref{HM} and that $N$ obeys the null convergence condition. Then
		\begin{equation}
		\label{est-conv}
		|Du|(x, t) \le \rho \Big(1-\frac{1}{\mathcal{M}^2}\Big)^{\frac{1}{2}}
		\end{equation}
		for all $(x,t) \in \bar\Omega \times [0, T]$, where $\mathcal{M}=\mathcal{M}(T)$ is a constant bigger than $1$ for $T>0$.
	\end{prop}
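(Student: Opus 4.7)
The approach I would take is to apply the parabolic maximum principle to the function $f = \Theta/\rho = \cosh\alpha$, where $\alpha$ is the hyperbolic angle between $X$ and $\nu$ (see \eqref{thetarho}). A pointwise upper bound on $f$ translates directly into the desired estimate on $|Du|/\rho$ via \eqref{thetaW}. The advantage of working with the quotient $\Theta/\rho$, rather than with a linear combination such as $E\Theta - \phi$ as in the previous lemma, is that the factor $1/\rho$ produces an extra negative contribution at critical points and dispenses with the a priori bound on $|H|$.

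The heart of the argument is to show that $Qf \le 0$ at any interior critical point. Using the identity
\[
Q(\Theta/\rho) = \frac{Q\Theta}{\rho} - \frac{\Theta\, Q\rho}{\rho^{2}} + \frac{2\langle\nabla\Theta,\nabla\rho\rangle}{\rho^{2}} - \frac{2\Theta|\nabla\rho|^{2}}{\rho^{3}},
\]
the relation $\nabla\Theta = (\Theta/\rho)\nabla\rho$ at a critical point makes the two gradient terms cancel exactly. I would then substitute the bound $Q\Theta \le 2\rho' H - \Theta|A|^{2}$ (from \eqref{heat-Theta} and the null convergence condition \eqref{TCC}) and the lower bound $Q\rho \ge n(\rho')^{2}/\rho$ (derived as in the proof of \eqref{est-rho} from \eqref{heat-s} together with $\rho'' \le (\rho')^{2}/\rho$ supplied by \eqref{max-princ}). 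Absorbing $2\rho' H/\rho$ into $\Theta|A|^{2}/\rho$ via Young's inequality combined with $H^{2} \le n|A|^{2}$, the zero-order terms collapse to a multiple of $\rho^{2}-\Theta^{2}$, which is non-positive since $\Theta = \rho\cosh\alpha \ge \rho$.

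The boundary hypothesis follows from \eqref{nablatheta}, \eqref{nablarho} and the convexity of $K$ (Remark \ref{rem-conv}): these combine to give $\langle\nabla f,\mu\rangle = -\Theta\, II^{K}(\nu,\nu)/\rho \le 0$ along $\partial\bar\Omega\times[0,T)$. Lemma \ref{wmp} then yields $\sup f \le \mathcal{M} := \max_{\Sigma_{0}}(\Theta/\rho) \ge 1$, and converting back through \eqref{thetaW} gives $W \ge \rho/\mathcal{M}$, which is equivalent to \eqref{est-conv}.

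The main obstacle I anticipate is the sign bookkeeping at the critical point: the exact cancellation of the cross terms, the precise choice of Young parameter so that $\Theta|A|^{2}$ absorbs $2\rho' H$, and the correct orientation of the concavity hypothesis $(\rho'/\rho)' \le 0$ needed to secure $Q\rho \ge 0$ all have to line up simultaneously. If any one of these ingredients is off, the negativity of $Qf$ at the critical point fails and the argument collapses.
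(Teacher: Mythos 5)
Your proposal is correct, and it follows a genuinely different line from the paper's. The paper also works with the quotient $\theta = \Theta/\rho$ and uses the same identity and the same boundary computation, but at the key step it does \emph{not} try to make $Q\theta\le 0$ at critical points. Instead it invokes the mean-curvature bound $|H|\le C_1$ from the preceding lemma (which itself rests on the heat equation for $H$, the NCC, and the boundary inequality $\langle\nabla H^2,\mu\rangle\le 0$), keeps the full inequality
\[
Q\theta - \tfrac{2}{\rho}\langle\nabla\theta,\nabla\rho\rangle \le 2\tfrac{\rho'}{\rho}H \le 2C_-C_1,
\]
and then concludes via the maximum principle that $\theta\le 2C_-C_1T+\max_{\Sigma_0}\theta=:\mathcal{M}$, a constant that grows linearly in $T$. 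Your argument, by contrast, completes the square: at a critical point, writing $|A|^2\ge H^2/n$ and $Q\rho\ge n(\rho')^2/\rho$ turns the right-hand side of $Q\theta\le \frac{1}{\rho}Q\Theta-\frac{\Theta}{\rho^2}Q\rho$ into
\[
-\frac{\Theta}{n\rho}\Big(H-\frac{n\rho'}{\Theta}\Big)^2 + \frac{n(\rho')^2}{\rho^3\Theta}\big(\rho^2-\Theta^2\big)\le 0,
\]
since $\Theta=\rho\cosh\alpha\ge\rho$. So you get $Q\theta\le 0$ at critical points outright, and Lemma \ref{wmp} gives the \emph{time-independent} bound $\mathcal{M}=\max_{\Sigma_0}\theta$. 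This is sharper than the paper's constant, and it is more self-contained: it dispenses entirely with the $|H|$-bound and thus with the preceding lemma. The only cosmetic difference is that your $\mathcal{M}$ equals $1$ when $\Sigma_0$ is a slice (so $Du\equiv 0$), whereas the paper's $\mathcal{M}$ is strictly bigger than $1$ for $T>0$ by construction; but both constants satisfy the inequality as stated.
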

	
	\begin{proof} Note that 
		\begin{align*}
			Q \rho = \rho' Qs - \rho '' |\nabla s|^2 =n \frac{\rho'^2}{\rho} +\rho\bigg(\frac{\rho'^2}{\rho^2}  -\frac{ \rho '' }{\rho} \bigg)|\nabla s|^2
		\end{align*}
		Denoting
		\[
		\theta = \frac{\Theta}{\rho}
		\]
		one has taking \eqref{heat-Theta} into account
		\begin{align*}
			Q\theta - \frac{2}{\rho}\langle \nabla \theta, \nabla\rho\rangle  & = \frac{1}{\rho} Q\Theta -\frac{\Theta}{\rho^2} Q\rho \\
			& = 2\frac{\rho' }{\rho} H-\theta \left( {\rm \overline{Ric}}(\nu,\nu)+|A|^2+n\frac{\rho''}{\rho}\right) - \theta \bigg(n \frac{\rho'^2}{\rho^2} +\bigg(\frac{\rho'^2}{\rho^2}  -\frac{ \rho '' }{\rho} \bigg)|\nabla s|^2\bigg).
		\end{align*}
		Hence \eqref{TCC} implies that
		\begin{align*}
			Q\theta - \frac{2}{\rho}\langle \nabla \theta, \nabla\rho\rangle \le 2\frac{\rho' }{\rho} H- \theta \bigg(n \frac{\rho'^2}{\rho^2} +\bigg(\frac{\rho'^2}{\rho^2}  -\frac{ \rho '' }{\rho} \bigg)|\nabla s|^2\bigg).
		\end{align*}
		Using  \eqref{max-princ}, \eqref{HM}, \eqref{thetarho} and \eqref{bound-H} we achieve
		\begin{equation*}
			Q\theta - \frac{2}{\rho}\langle \nabla \theta, \nabla\rho\rangle \le 2\frac{\rho' }{\rho} H \le 2C_- C_1.
		\end{equation*}
		Taking in account expressions \eqref{nablarho} and \eqref{nablatheta} one deduces from the maximum principle that
		\begin{align}
			\label{matM}
			\theta \leq 2C_-C_1T+\max _{\Sigma _0}\theta =:\mathcal{M} \,\,\, {\rm in} \,\, \bar\Omega \times [0,T].
		\end{align}
		We remark that from \eqref{thetarho} we get that $\mathcal{M}\ge 1$ and $\mathcal{M}>1$ for $T>0$. Taking equation \eqref{thetaW} into account we accomplish \eqref{est-conv}.
	\end{proof}

	%
	
	\subsection{Mean convexity}\label{mean}  The next result establishes that the evolving graphs $\Sigma_t$ remain mean convex if the initial graph $\Sigma_0$ is supposed to be mean convex, under the assumption that $\rho$ satisfies \eqref{curv-hip}. This is the case when $N$ has constant sectional curvature $\gamma $ for instance.
	
	\begin{prop}
		\label{mean-convex}
		Suppose that $\Sigma _ 0$ is mean convex ($H(\cdot , 0) >0$) and
		\begin{equation}
		\label{curv-hip}
		\sup _{\bar \Omega \times [0, T]}-\frac{\rho '' (s)}{\rho(s)} = \lambda < +\infty
		\end{equation}
		for some constant $\lambda  $. Then
		\begin{equation*}
			\label{est-conv-H}
			H (\cdot , t)>0\,\,\,\, {\rm for all} \,\, t\in [0, T).
		\end{equation*}
	\end{prop}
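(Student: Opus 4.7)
My plan is to apply the scalar maximum principle, Lemma~\ref{smp}, to the function $f = -H$ with a linear comparison ODE. From the evolution equation~\eqref{heat-H} one has $Q(-H) = -(|A|^2 + \overline{\rm Ric}(\nu,\nu))(-H)$, and from~\eqref{nablaH2} combined with Remark~\ref{rem-conv}, $\langle \nabla(-H), \mu\rangle = H\, II^K(\nu,\nu) \ge 0$ whenever $H \ge 0$. This is exactly the boundary sign condition demanded by Lemma~\ref{smp}, which is the reason for working with $-H$ rather than $H$ directly.

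The next step is to bound $|A|^2 + \overline{\rm Ric}(\nu,\nu)$ from above on $\bar\Omega \times [0, T]$ by some constant $E$. Smoothness of the flow on the compact spacetime domain provides an upper bound for $|A|^2$. For $\overline{\rm Ric}(\nu,\nu)$, the $C^0$ estimate on $s$ obtained in Section~\ref{gradient} confines $s$ to a compact subinterval of $I$ where $\rho$, $\rho'$ and $\rho''$ are continuous and therefore bounded; in particular the hypothesis~\eqref{curv-hip} makes the relevant upper bound explicit, and together with NCC~\eqref{TCC} it pins down $\overline{\rm Ric}(\nu,\nu)$.

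With the bound $|A|^2 + \overline{\rm Ric}(\nu,\nu) \le E$ in hand, work on the maximal subinterval where $H > 0$, so that $f = -H < 0$. Multiplying the bound by $-f > 0$ yields $Qf = -(|A|^2 + \overline{\rm Ric}(\nu,\nu))f \le -Ef$. Applying Lemma~\ref{smp} with $Y = 0$, $F(f) = -E f$ and initial bound $c = \max_{\Sigma_0} f = -\min_{\Sigma_0} H < 0$, the comparison ODE $\gamma' = -E\gamma$ with $\gamma(0) = c$ has solution $\gamma(t) = c\, e^{-Et}$, which is strictly negative for all $t$. Therefore $f(x,t) \le \gamma(t) < 0$, equivalently $H(x,t) \ge |c|\, e^{-Et} > 0$, on $\bar\Omega \times [0, T)$; in particular the maximal positivity subinterval is all of $[0, T)$.

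The main obstacle is the a priori upper bound on $|A|^2$, which is not part of the preceding $C^0$ and $C^1$ estimates and must be supplied by regularity of the flow on compact time intervals, or else by a dedicated $C^2$ estimate. A more self-contained alternative is a contradiction argument invoking the parabolic strong maximum principle in the interior and the Hopf boundary lemma on $\partial\bar\Omega$: if $H$ were to vanish for the first time at $(x_0, t_0)$, an interior $x_0$ would force $H \equiv 0$ back to $t=0$, contradicting that $\Sigma_0$ is mean convex, while a boundary $x_0$ would contradict the strict Hopf inequality $\langle \nabla H, \mu\rangle(x_0, t_0) < 0$, because~\eqref{nablaH2} yields $\langle \nabla H, \mu\rangle = -H\, II^K = 0$ at that point.
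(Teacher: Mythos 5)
The paper's proof takes a genuinely different route, and there is a gap in yours that the paper's trick is designed to avoid. The paper works with the ratio $v = H/\Theta$, for which the evolution equations \eqref{heat-H} and \eqref{heat-Theta} combine so that the $(|A|^2 + \overline{\rm Ric}(\nu,\nu))$ terms \emph{cancel identically}, leaving
\[
Qv - \tfrac{2}{\Theta}\langle \nabla v, \nabla \Theta\rangle = -2\rho' v^2 + n\tfrac{\rho''}{\rho}\,v ,
\]
which only involves $\rho$-quantities controlled by \eqref{max-princ} and \eqref{curv-hip}; moreover \eqref{nablatheta} and \eqref{nablaH2} give $\langle\nabla v,\mu\rangle = 0$ exactly on $\partial\bar\Omega$, so the Neumann boundary term drops out with no sign to worry about. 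Your approach, working directly with $f=-H$, foregoes both of these advantages. You correctly flag the first issue yourself: you need an a priori upper bound on $|A|^2 + \overline{\rm Ric}(\nu,\nu)$, and no such $C^2$ bound is available at this stage of the paper (Proposition~\ref{hess-est-prop} comes later and is independent of this result). Your appeal to ``smoothness of the flow on compact time intervals'' can be massaged into a correct compact-exhaustion over $T' < T$, but this makes the conclusion depend on the unestablished $C^2$ theory, whereas the paper deliberately decouples mean-convexity from it.

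There is also a boundary-sign problem in your first argument that the proposal glosses over. To conclude $f=-H\le\gamma<0$ via Lemma~\ref{smp} (whose statement has a sign typo — tracing the proof, the hypothesis must be $\langle\nabla f,\mu\rangle\le 0$ for the upper-bound conclusion), you would need $\langle\nabla(-H),\mu\rangle\le 0$, that is $\langle\nabla H,\mu\rangle\ge 0$. But \eqref{nablaH2} together with Remark~\ref{rem-conv} gives $\langle\nabla H,\mu\rangle = -H\,II^K(\nu,\nu)\le 0$ on the set where $H>0$, which is the wrong sign for the boundary lemma to exclude a boundary minimum of $H$. This is precisely why the paper passes to $v=H/\Theta$: it is the unique combination for which the boundary derivative vanishes. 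Your alternative Hopf-lemma contradiction argument, by contrast, does handle the boundary correctly (a first vanishing of $H$ at a boundary point would force $\langle\nabla H,\mu\rangle<0$ by Hopf, contradicting $\langle\nabla H,\mu\rangle=-H\,II^K=0$ there), and is in spirit close to what the paper does with $v$; but it still rests on bounded coefficients, i.e.\ on the $|A|^2$ bound. The missing idea is the passage to the quotient $H/\Theta$.
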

	
	%
	
	\begin{proof} Denoting
		\[
		v = \frac{H}{\Theta}
		\]
		we get
		\begin{align*}
			Qv =  \frac{1}{\Theta} QH - \frac{H}{\Theta^2}  Q\Theta + \frac{2}{\Theta^2} \langle \nabla H, \nabla\Theta\rangle -\frac{2|\nabla\Theta|^2}{\Theta^3}  H.
		\end{align*}
		However
		\[
		\frac{2}{\Theta^2} \langle \nabla H, \nabla\Theta\rangle = \frac{2}{\Theta}\langle \nabla v, \nabla\Theta\rangle + \frac{2|\nabla\Theta|^2}{\Theta^3}H,
		\]
		then we conclude that
		\begin{equation}\nonumber 
		\label{Qv}
		Qv - \frac{2}{\Theta}\langle \nabla v, \nabla \Theta\rangle =  \frac{1}{\Theta} QH - \frac{H}{\Theta^2}  Q\Theta.
		\end{equation}
		Using \eqref{heat-Theta} and \eqref{heat-H} one obtains
		\begin{align*}
			Qv - \frac{2}{\Theta}\langle \nabla v, \nabla \Theta\rangle & = -(|A|^2+ {\rm \overline{Ric}}(\nu, \nu )) v - 2\rho'(s) v^2 + \Big( {\rm \overline{Ric}}(\nu,\nu)+|A|^2+n\frac{\rho''(s)}{\rho(s)}\Big)v\\
			& = - 2\rho'(s) v^2 + n\frac{\rho''(s)}{\rho(s)}v .
		\end{align*}
		Consequently
		\begin{eqnarray*}
			\frac{1}{2}Qv^ 2 & =&vQv-|\nabla v|^ 2 \\
			&= &2\frac{v}{\Theta }\langle \nabla v, \nabla \Theta \rangle -\frac{1}{2v}\langle \nabla v^ 2, \nabla v\rangle -2\rho '(s)v^ 3 +n\frac{\rho ''(s)}{\rho (s)}v^ 2.
		\end{eqnarray*}
		Defining  $\eta(t)= e^{2n\lambda t}$, one concludes from \eqref{curv-hip} that
		\[
		2n\frac{\rho '' (s)}{\rho(s) } \eta(t) + \eta ' (t) \ge 0.
		\] 
		Thus 
		\begin{eqnarray}\label{heat-eta-v}
		Q(\eta v^ 2)+ \Big \langle 2\nabla (\eta v^ 2), \frac{\nabla v}{2v} - \frac{\nabla \Theta }{\Theta }\Big \rangle  & = &  -4\rho '(s) \eta v^ 3 +2n\frac{\rho ''(s)}{\rho (s)} \eta v^ 2 +v^ 2 \eta '\\
		& \ge & -4\rho '(s) \eta v^ 3. \nonumber 
		\end{eqnarray}
		Remark that from \eqref{thetarho} and \eqref{bound-H} 
		\begin{equation*}
			-4\rho '(s)v \ge -4\rho '(s)\frac{|H|}{\Theta }\ge -4C_1\frac{\rho '(s)}{\rho (s)}\frac{\rho (s)}{\Theta }\ge -4C_-C_1=:-\beta  .
		\end{equation*}
		Coming back to \eqref{heat-eta-v} we obtain
		\[ 
		Q(\eta v^ 2)+ \Big \langle 2\nabla (\eta v^ 2), \frac{\nabla v}{2v} - \frac{\nabla \Theta }{\Theta }\Big \rangle \ge -\beta \eta v^ 2.
		\]
		We also note that \eqref{nablatheta} and \eqref{nablaH2} yield
		\[
		\langle \nabla v, \mu\rangle = \frac{1}{\Theta}\langle \nabla H, \mu\rangle -\frac{H}{\Theta^2} \langle \nabla \Theta, \mu\rangle = -\frac{H}{\Theta} II^K(\nu, \nu) +\frac{H}{\Theta} II^K(\nu, \nu)=0
		\]
		along $\partial\Sigma_t \subset K$.  
		Therefore from the maximum principle we get
		\[
		e^ {\beta t}\eta v^ 2\ge \min _{\Sigma _0}v^ 2>0,
		\]
		where the last inequality holds since $\Sigma _0 $ is mean convex. Thus we conclude that $H^2$ never attains zero, therefore $H>0$ along the whole flow.
	\end{proof}
	
	
	\subsection{Second order \emph{a priori} estimates.} Now we obtain second order estimates. For doing that, we need to use a cut-off function. Let $r$ is the geodesic distance $\operatorname{dist}(o,\cdot)$ from a given point $o\in \bar \Omega $ and  $R>0$ is chosen such that the geodesic ball $B_R(o)$ centered at $o\in \bar \Omega$ is contained in $\bar \Omega$.
	
	Let $\chi $ be the solution of the Jacobi equation 
	$$\chi ''(r)+K_{\bar\Omega }\chi (r)=0,$$
	with $\chi (0)=0$ and $\chi '(0)=1$, where $K_{\bar\Omega }$ is the sectional curvature of $\bar \Omega$. 
	Given a point $(x, s)$ in the region bounded by $K$ in $N$ (that is, the set of flow lines of $X$ that cross $\bar\Omega$) one sets 
	\[
	\xi(x,s)=\xi(x)= \xi(r)
	\]
	if $r = \operatorname{dist}(o,x)$ with a slight abuse of notation.  A possible choice of a compactly supported radial function is $\xi(r) = \varrho^3(r)$, where 
	\[
	\varrho(r) = (C_R - \chi(r))_+
	\]
	and $C_R \ge 2\chi(R)$.
	
	\begin{lemma} 
		Let $\xi$ as defined above. Hence
		\begin{equation*}
			Q\xi|_{\Sigma_t} = -\bar\Delta \xi - \langle \bar\nabla_\nu \bar\nabla \xi, \nu\rangle .
		\end{equation*}
		In addition
		\begin{equation*}
			Q\xi|_{\Sigma_t} \le   \mathcal{N}\big(2C_- ||D\xi||_{M_s} + ||DD\xi||_{M_s}\big) 
		\end{equation*}
		in $\bar \Omega \times [0, T]$ for some constant $\mathcal{N}=\mathcal{N}(T)$.
	\end{lemma}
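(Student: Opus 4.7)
The plan is to establish the stated identity first, then expand everything in warped-product coordinates and absorb the prefactors into a single time-dependent constant using the $C^1$ estimate of Proposition \ref{est-grad-2}.

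For the identity $Q\xi = -\bar\Delta\xi - \langle \bar\nabla_\nu\bar\nabla\xi,\nu\rangle$, I would begin from the fact that $\xi(x,s) = \xi(x)$ is constant along the flow lines of $X = \rho\partial_s$. Pulling $\xi$ back along $\varphi_t(x) = (u(x,t),x)$ and using \eqref{mcf-1} yields $\partial_t(\xi\circ\varphi_t) = \langle \bar\nabla\xi, H\nu\rangle = H\langle\bar\nabla\xi,\nu\rangle$. For the intrinsic Laplacian, the Gauss decomposition $\bar\nabla_{{\sf e}_k}{\sf e}_k = \nabla_{{\sf e}_k}{\sf e}_k + II({\sf e}_k,{\sf e}_k)\nu$ (with the usual sign coming from $\langle\nu,\nu\rangle = -1$) applied to a tangent orthonormal frame gives $\Delta(\xi|_{\Sigma_t}) = \tr_{\Sigma_t}(\bar\nabla^2\xi) + H\langle\bar\nabla\xi,\nu\rangle$. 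Completing $\{{\sf e}_k\}$ by $\nu$ to a Lorentzian orthonormal frame, the ambient Laplacian satisfies $\bar\Delta\xi = \tr_{\Sigma_t}(\bar\nabla^2\xi) - \bar\nabla^2\xi(\nu,\nu)$, so $\tr_{\Sigma_t}(\bar\nabla^2\xi) = \bar\Delta\xi + \langle\bar\nabla_\nu\bar\nabla\xi,\nu\rangle$. Substituting into $Q\xi = \partial_t(\xi\circ\varphi_t) - \Delta(\xi|_{\Sigma_t})$, the two $H\langle\bar\nabla\xi,\nu\rangle$ terms cancel and the identity drops out.

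For the estimate, I would expand the ambient Hessian in the coordinate basis using \eqref{nabla-frame}. Because $\xi$ has no $s$-dependence, one gets directly $\bar\nabla^2\xi(\partial_s,\partial_s) = 0$, $\bar\nabla^2\xi(\partial_s,\partial_i) = -\frac{\rho'}{\rho}\xi_i$, and $\bar\nabla^2\xi(\partial_i,\partial_j) = \xi_{i;j}$ (the Hessian on $(M,\sigma)$). The Lorentzian trace collapses to $\bar\Delta\xi = \frac{1}{\rho^2}\sigma^{ij}\xi_{i;j}$, which is precisely the Laplacian in the slice metric $\rho^2\sigma$. Writing $\nu = \frac{\rho}{W}\partial_s + \frac{1}{\rho W}\sigma^{ij}u_j\partial_i$ as in \eqref{nu}, a direct computation gives $\bar\nabla^2\xi(\nu,\nu) = -\frac{2\rho'}{\rho W^2}u^i\xi_i + \frac{1}{\rho^2 W^2}u^iu^j\xi_{i;j}$. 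The key structural observation is that the $\partial_s$--$\partial_s$ term vanishes, so every surviving contribution is quadratic in the horizontal components of $\nu$ and therefore each picks up a factor controlled by $\rho/W$.

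The final step is conversion to the $M_s = \rho(s)^2\sigma$ norms appearing in the statement and control of the prefactors. Using $\|D\xi\|_\sigma = \rho\|D\xi\|_{M_s}$ and $\|D^2\xi\|_\sigma = \rho^2\|D^2\xi\|_{M_s}$, Proposition \ref{est-grad-2} gives $\rho/W \le \mathcal{M}(T)$ uniformly on $\bar\Omega\times[0,T]$, and \eqref{HM} gives $\rho'/\rho \le C_-$. Combining, each term contributing to $Q\xi = -\bar\Delta\xi - \bar\nabla^2\xi(\nu,\nu)$ is bounded by $\mathcal{M}^2$ times either $2C_-\|D\xi\|_{M_s}$ or $\|D^2\xi\|_{M_s}$, up to harmless dimensional factors. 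Absorbing these and $\mathcal{M}^2$ into a single $\mathcal{N} = \mathcal{N}(T)$ produces the stated inequality. The only delicate point I expect is the bookkeeping of $\rho$-powers when moving between $\sigma$-norms (produced naturally by the warped-product Hessian formulas), ambient norms, and the $M_s$-norms in the statement; everything balances precisely because the gradient estimate keeps $|Du|^2/\rho^2$ bounded away from $1$.
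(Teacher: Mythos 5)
Your argument is correct and follows essentially the same route as the paper: decomposing $\nabla\xi$ into $\bar\nabla\xi$ plus its normal component, using the Lorentzian frame identity $\bar\Delta\xi = \operatorname{tr}_{\Sigma_t}(\bar\nabla^2\xi) - \bar\nabla^2\xi(\nu,\nu)$ so that the $H\langle\bar\nabla\xi,\nu\rangle$ terms cancel against $\partial_t\xi$, then expanding the ambient Hessian via the warped-product connection \eqref{nabla-frame} and bounding the prefactors by $\Theta/\rho\le\mathcal{M}$ from \eqref{matM} and $\rho'/\rho\le C_-$. The only place you are slightly more careful than the paper is in explicitly noting that the $-\Delta_{M_s}\xi$ term contributes a dimensional factor $n$ in front of $\|DD\xi\|_{M_s}$ that must be absorbed into $\mathcal{N}$; the paper's displayed inequality silently omits this term, so your bookkeeping is actually the tighter of the two.
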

	
	\begin{proof} 
		In what follows, we also denote by $\xi$ the restriction of this function to a graph $\Sigma_t$.  Fixed this notation, one has
		\begin{align*}
			\nabla \xi = \bar\nabla \xi + \langle \bar\nabla \xi, \nu\rangle \nu.
		\end{align*}
		Hence using an orthonormal local frame $\{{\sf e}_k\}_{k=1}^{n}$ in $\Sigma _t$
		\begin{align*}
			\Delta \xi|_{\Sigma_t} &= \sum_k \langle \bar\nabla_{{\sf e}_k} \nabla \xi, {\sf e}_k\rangle =  \sum_k \langle \bar\nabla_{{\sf e}_k} \bar\nabla \xi, {\sf e}_k\rangle +\sum_k \langle \bar\nabla_{{\sf e}_k} \nu, {\sf e}_k\rangle\langle \bar\nabla \xi, \nu\rangle\\
			&  = \bar\Delta \xi + \langle \bar\nabla_\nu \bar\nabla \xi, \nu\rangle + H \langle \bar\nabla \xi, \nu\rangle.
		\end{align*}
		On the other hand
		\[
		\frac{\partial \xi}{\partial t} = \langle \bar\nabla \xi, H\nu\rangle = H \langle\bar\nabla \xi, \nu\rangle,
		\]
		whence we conclude that
		\begin{equation}
		\label{heat-chi}
		Q\xi|_{\Sigma_t} = -\bar\Delta \xi - \langle \bar\nabla_\nu \bar\nabla \xi, \nu\rangle .
		\end{equation}
		Now we compute
		\begin{align*}
			& \langle \bar\nabla_\nu \bar\nabla\xi, \nu\rangle = \Big\langle \nu, \frac{\partial}{\partial s}\Big\rangle^2 \Big\langle \bar\nabla_{\frac{\partial}{\partial s}} \bar\nabla\xi, \frac{\partial}{\partial s}\Big\rangle  - 2\nu^i \Big\langle \nu, \frac{\partial}{\partial s}\Big\rangle \Big\langle \bar\nabla_{\frac{\partial}{\partial x^i}} \bar\nabla\xi, \frac{\partial}{\partial s}\Big\rangle +\nu^i \nu^j \Big\langle \bar\nabla_{\frac{\partial}{\partial x^i}} \bar\nabla\xi, \frac{\partial}{\partial x^j}\Big\rangle
		\end{align*}
		where $\nu^j = g^{jk} \langle\nu, \frac{\partial}{\partial x^k}\rangle$. Therefore since $\langle \bar\nabla \xi, \frac{\partial}{\partial s}\rangle =0$ one has using \eqref{nabla-frame}
		\begin{align*}
			\langle \bar\nabla_\nu \bar\nabla\xi, \nu\rangle &=    2\nu^i \Big\langle \nu, \frac{\partial}{\partial s}\Big\rangle \Big\langle  \bar\nabla\xi, \bar\nabla_{\frac{\partial}{\partial x^i}}\frac{\partial}{\partial s}\Big\rangle +\nu^i \nu^j \Big\langle \bar\nabla_{\frac{\partial}{\partial x^i}} \bar\nabla\xi, \frac{\partial}{\partial x^j}\Big\rangle\\
			& \,\, = 2 \frac{\rho' (s) }{\rho(s) }\Big\langle \nu, \frac{\partial}{\partial s}\Big\rangle \nu^i \xi_i +\nu^i \nu^j \Big\langle \bar\nabla_{\frac{\partial}{\partial x^i}} \bar\nabla\xi, \frac{\partial}{\partial x^j}\Big\rangle\\
			& \,\, = 2 \frac{\rho' (s) }{\rho(s) }\Big\langle \nu, \frac{\partial}{\partial s}\Big\rangle \langle D\xi, \nu^{\sf t}\rangle +\nu^i \nu^j \Big\langle D_{\frac{\partial}{\partial x^i}} D\xi, \frac{\partial}{\partial x^j}\Big\rangle,
		\end{align*}
		where $D$ is the induced connection in $M_s:=\{s\} \times M  \subset N$ and $\nu^{\sf t} = \nu +\langle \nu, \frac{\partial}{\partial s}\rangle \frac{\partial}{\partial s}$. Since $\bar\nabla \xi|_{M_s} = D\xi$ we also have
		\begin{align*}
			& \bar\Delta\xi =    g^{ij} \Big\langle \bar\nabla_{\frac{\partial}{\partial x^i}} \bar\nabla\xi, \frac{\partial}{\partial x^j}\Big\rangle - \Big\langle \bar\nabla_{\frac{\partial}{\partial s}} \bar\nabla\xi, \frac{\partial}{\partial s}\Big\rangle = 
			g^{ij} \Big\langle D_{\frac{\partial}{\partial x^i}} D\xi, \frac{\partial}{\partial x^j}\Big\rangle = \Delta_{M_s} \xi|_{M_s},
		\end{align*}
		where $\Delta_{M_s}$ is the Laplace-Beltrami operator in $M_s$ with respect to the induced metric $g_{ij}=\rho(s)^2 \sigma_{ij}$. 
		We  conclude that
		\begin{align*}
			Q\xi|_{\Sigma_t}& = -\Delta_{M_s} \xi|_{M_s} -2 \frac{\rho' (s) }{\rho(s) }\Big\langle \nu, \frac{\partial}{\partial s}\Big\rangle \langle D\xi, \nu^{\sf t}\rangle -\nu^i \nu^j \Big\langle D_{\frac{\partial}{\partial x^i}} D\xi, \frac{\partial}{\partial x^j}\Big\rangle\\
			& \,\, \le 2 \frac{\rho' (s) }{\rho(s) }  ||D\xi||_{M_s} \frac{\Theta}{\rho}\Big(\Big(\frac{\Theta}{\rho}\Big)^2-1\Big)^{\frac{1}{2}} + ||DD \xi||_{M_s}\Big(\frac{\Theta}{\rho}\Big)^2  \\
			& \,\, \le \mathcal{N}\big(2C_- ||D\xi||_{M_s} + ||DD\xi||_{M_s}\big) ,
		\end{align*}
		where $\mathcal{N}=\max \{\mathcal{M}(\mathcal{M}^2-1)^{1/2}, \,\, \mathcal{M}^ 2 \}$, being $\mathcal{M}$ defined in \eqref{matM}. This finishes the proof.
	\end{proof}

	\begin{prop}
		\label{simons}
		We have
		\begin{eqnarray*}
			\frac{1}{2} Q|A|^2 +|\nabla A|^2  & = & |A|^4+T_{ij} a^{ij} +
			g^{k\ell}(\bar R^s_{ki\ell}a_{sj}a^{ij}+\bar R^s_{kij}a_{\ell s} a^{ij}) -H\langle \overline{ R}(\partial _ i, \nu )\nu , \partial _ j\rangle ,
		\end{eqnarray*}
		where $T_{ij}=g^{k\ell}(\nabla_i L_{k\ell j}+\nabla_k L_{\ell ij}) a^{ij}.$
		
	\end{prop}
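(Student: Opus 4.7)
The plan is to derive this Simons-type identity by computing $\tfrac{1}{2}\partial_t|A|^2$ and $a^{ij}\Delta a_{ij}$ separately via the standard evolution-plus-Simons route, then combining them using the Kato-type identity $\tfrac{1}{2}\Delta|A|^2 = a^{ij}\Delta a_{ij} + |\nabla A|^2$. For the first piece, I would use $\partial_t\varphi = H\nu$ together with $[\partial_t,\partial_i]=0$, the evolution of $\nu$ from \eqref{evol-nu}, and the ambient identity $\bar\nabla_{\partial_t}\bar\nabla_{\partial_i}\nu = \bar\nabla_{\partial_i}\bar\nabla_{\partial_t}\nu + \bar R(H\nu,\partial_i)\nu$ to obtain, by direct computation, an expression of the form
\[
\partial_t a_{ij} = \nabla_i\nabla_j H + H a_i^k a_{kj} + H\langle \bar R(\nu,\partial_i)\nu,\partial_j\rangle.
\]
Contracting with $a^{ij}$ and using $\partial_t g^{ij} = -2Ha^{ij}$ then produces $\tfrac{1}{2}\partial_t|A|^2$ in terms of $a^{ij}\nabla_i\nabla_j H$, a cubic $H\,\mathrm{tr}(A^3)$ contribution, and the curvature contraction $Ha^{ij}\langle\bar R(\nu,\partial_i)\nu,\partial_j\rangle$.

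For the second piece, the Codazzi equation in the curved ambient is nonsymmetric,
\[
\nabla_i a_{jk} - \nabla_j a_{ik} = L_{ijk},
\]
with $L_{ijk}$ built from components of the ambient Riemann tensor contracted against $\nu$. Writing $\Delta a_{ij} = g^{k\ell}\nabla_k\nabla_\ell a_{ij}$, applying Codazzi twice to move both derivatives onto $H$, and commuting intrinsic covariant derivatives (whose commutator involves the intrinsic Riemann tensor, which the Gauss equation rewrites as ambient Riemann plus quadratic terms in $a$) produces an expression of the schematic form $\nabla_i\nabla_j H \pm |A|^2 a_{ij} \pm H a_i^k a_{kj}$ plus exactly the two curvature corrections appearing on the right-hand side of the statement: the derivative terms $g^{k\ell}(\nabla_i L_{k\ell j} + \nabla_k L_{\ell ij})$ that assemble into $T_{ij}$, and the tensorial contractions $g^{k\ell}(\bar R^s_{ki\ell}a_{sj} + \bar R^s_{kij}a_{\ell s})$.

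Combining the two, the $\nabla^2 H$ terms cancel between $\tfrac{1}{2}\partial_t|A|^2$ and $-a^{ij}\Delta a_{ij}$, the cubic $H\,\mathrm{tr}(A^3)$ terms cancel, the intrinsic contraction $a^{ij}|A|^2 a_{ij}$ delivers $|A|^4$, and the remaining curvature pieces reorganize into $T_{ij}a^{ij}$ together with the stated $\bar R$-contractions; the term $-H\langle \bar R(\partial_i,\nu)\nu,\partial_j\rangle$ in the statement originates from the $\partial_t a_{ij}$ computation via the antisymmetry $\bar R(\nu,\partial_i)=-\bar R(\partial_i,\nu)$. The main obstacle will be careful sign bookkeeping: because the hypersurface is spacelike in a Lorentzian ambient, the Weingarten relation $\bar\nabla_i\nu = a_i^k\partial_k$ carries no minus and the Gauss formula reads $\bar\nabla_i\partial_j = \nabla_i\partial_j + a_{ij}\nu$, so several signs flip relative to the familiar Riemannian Simons identity; in addition, each application of Codazzi must be tracked to ensure the $L$-contributions package correctly into the single tensor $T_{ij}$ of the statement.
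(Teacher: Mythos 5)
Your plan is structurally identical to the paper's own proof in the appendix: derive $\Delta a_{ij}$ by applying Codazzi twice, commuting covariant derivatives with the intrinsic Riemann tensor, and trading intrinsic curvature for ambient curvature plus quadratic $a$-terms via the Gauss equation; derive $\partial_t a_{ij}$ from $\partial_t\varphi = H\nu$ and the ambient curvature identity; and combine via $\tfrac{1}{2}\Delta|A|^2 = a^{ij}\Delta a_{ij} + |\nabla A|^2$. The definition of $L_{ijk}$ as a single contraction of $\bar R$ with $\nu$ and the packaging of its derivatives into $T_{ij}$ also coincide. So the route is the same, and the pitfalls you flag (Lorentzian sign flips in Gauss/Weingarten, and careful tracking of $L$-terms) are exactly the ones that matter here.

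One concrete point of divergence worth noting, because it affects whether your plan actually lands on the stated formula: the paper's appendix passes from $\partial_t a_{ij}$ to $\tfrac{1}{2}\partial_t|A|^2$ by simply contracting with $a^{ij}$, i.e.\ it tacitly uses $\tfrac{1}{2}\partial_t|A|^2 = a^{ij}\partial_t a_{ij}$ and drops the contribution of $\partial_t g^{ij}$. You, by contrast, propose to include $\partial_t g^{ij} = -2Ha^{ij}$, which adds a term $-2H\operatorname{tr}(A^3)$ to $\tfrac{1}{2}\partial_t|A|^2$. Combined with the $+H\operatorname{tr}(A^3)$ already present in $a^{ij}\partial_t a_{ij}$ and the $+H\operatorname{tr}(A^3)$ that the paper records in $a^{ij}\Delta a_{ij}$, the cubic terms do not cancel under the paper's bookkeeping; they leave a residual $-2H\operatorname{tr}(A^3)$. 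In other words, your stated expectation that ``the cubic $H\,\mathrm{tr}(A^3)$ terms cancel'' cannot be obtained simply by grafting the $\partial_t g^{ij}$ correction onto the paper's intermediate formulas; it requires you to re-derive the $a^{ij}\Delta a_{ij}$ side with the genuinely Lorentzian Gauss-equation sign (the $\langle\nu,\nu\rangle=-1$ factor flips the quadratic $a$-terms) rather than copying the sign the paper writes. Related to this, your Codazzi sign $\nabla_i a_{jk} - \nabla_j a_{ik} = +L_{ijk}$ is the one that actually follows from expanding $\partial_i\langle\bar\nabla_j\nu,\partial_k\rangle$ as the paper does; the $-L_{ijk}$ the paper records appears to be a transcription slip, so take care that the definition of $T_{ij}$ you carry forward is consistent with whichever sign you adopt for $L$. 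In short: same method as the paper, slightly more careful accounting on your side, but be aware that the paper's displayed intermediate identities are not all sign-consistent with one another, so reproducing the final Proposition as stated requires fixing those signs jointly rather than trusting each intermediate line.
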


	\begin{proof} See appendix. 
	\end{proof}

	\begin{prop}
		\label{hess-est-prop}
		Suppose that for all $x\in M, {\sf v} \in T_x M$ with $\langle Dr, {\sf v}\rangle =0$ and
		\begin{equation}
		\label{comp-thm}
		K_M (Dr \wedge {\sf v})\ge -\chi (r)
		\end{equation}
		where $K_M (Dr \wedge {\sf v})$ denotes the sectional curvature of a plane containing $Dr$. Then
		\begin{equation}
		\label{hess-est}
		|A|^2 < \max\Big \{|\mathcal{L}|^{\frac{1}{3}}, |\bar C|^{\frac{1}{2}}, \frac{C_4-T}{C_5(C_6T+C_7)} \Big \},
		\end{equation}
		in $\bar \Omega \times [0, T]$, where 
		\[
|\mathcal{L}|=\sup _{\bar \Omega \times [0,T]}\Big ( |T_{ij}+\frac{1}{\sqrt{n}}\langle \bar R(\partial _ i, \nu ) \nu ,\partial _ j\rangle | \Big ) 
		\]
		and
		\[
		 |\bar C|=\cdots 
		\]
		$C_4 $ and $C_5$ positive constants and $C_7$ a negative constant.
	\end{prop}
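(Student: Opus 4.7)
The plan is to apply the maximum principle of Lemma \ref{wmp} to the test function $\Phi := \xi\,|A|^2$, where $\xi = \varrho^3$ is the compactly supported cut-off introduced above. The driving identity is Simons' formula of Proposition \ref{simons} combined with the heat-type inequality for $\xi$ proved in the preceding lemma. At any interior spatial maximum of $\Phi$ one has $\nabla\Phi = 0$ and $Q\Phi \ge 0$, and this pair of conditions will force $|A|^2$ to obey an algebraic inequality of the trichotomous form \eqref{hess-est}.

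First I would expand
\[
Q\Phi = (Q\xi)\,|A|^2 + \xi\,(Q|A|^2) - 2\,\langle\nabla\xi,\nabla|A|^2\rangle,
\]
substitute Proposition \ref{simons} for $\frac{1}{2} Q|A|^2$, and apply Kato's inequality $|\nabla A|^2\ge |\nabla|A||^2$ to keep $-2\xi|\nabla A|^2$ as a dissipative term. At the maximum, the identity $\nabla\Phi=0$ gives $\xi\,\nabla|A|^2 = -|A|^2\,\nabla\xi$, which lets me absorb the cross term $-2\langle\nabla\xi,\nabla|A|^2\rangle$ into a bounded remainder of the form $C\,|A|^2|\nabla\xi|^2/\xi$; this quotient is finite thanks to the explicit choice $\xi = \varrho^3$, which yields $|\nabla\xi|^2/\xi = 9\,\varrho\,|\nabla\varrho|^2$ on the support of $\xi$.

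Next I would bound the curvature pieces appearing in Proposition \ref{simons}. By definition of $|\mathcal{L}|$, the combination $T_{ij}a^{ij} - \frac{1}{\sqrt n}H\langle\bar R(\partial_i,\nu)\nu,\partial_j\rangle$ is controlled by $|\mathcal{L}|\,|A|$; the two Riemann contractions $g^{k\ell}(\bar R^s_{ki\ell}a_{sj}a^{ij} + \bar R^s_{kij}a_{\ell s}a^{ij})$ and the remaining contribution of the Hessian term are controlled by $|\bar C|\,|A|^2$, with $|\bar C|$ depending on the $C^0$--$C^1$ estimates of Section \ref{estimates}, the sectional-curvature comparison hypothesis \eqref{comp-thm}, and the GRW structure through \eqref{riccii}. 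Combined with the cut-off estimate $Q\xi \le \mathcal{N}(2C_-\,||D\xi||_{M_s} + ||DD\xi||_{M_s})$, this produces at the maximum point a schematic inequality
\[
|A|^4 \le |\mathcal{L}|\,|A| + |\bar C|\,|A|^2 + \mathcal{E}(T)\,|A|^2,
\]
where $\mathcal{E}(T)$ collects the cut-off contribution and is affine in $T$. A trichotomy according to which of the three right-hand terms dominates yields respectively $|A|^2\le|\mathcal{L}|^{1/3}$, $|A|^2\le|\bar C|^{1/2}$, and the explicit quotient $(C_4-T)/\bigl(C_5(C_6T+C_7)\bigr)$.

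The main technical obstacle is verifying the Neumann sign on $\partial\bar\Omega\times[0,T]$ so that Lemma \ref{wmp} actually applies. Writing $\langle\nabla\Phi,\mu\rangle = |A|^2\langle\nabla\xi,\mu\rangle + \xi\langle\nabla|A|^2,\mu\rangle$: the first summand is nonpositive because $\xi$ is radial about an interior point of the convex domain $\bar\Omega$, so $\langle Dr,\mu\rangle\ge 0$ forces $\langle\nabla\xi,\mu\rangle\le 0$. The second summand requires differentiating $|A|^2$ along $\mu$; using the boundary identity $II(\mu,U)=-II^K(\nu,U)$ from \eqref{IIK}, the convexity $II^K\ge 0$ of Remark \ref{rem-conv}, the Codazzi equation, and the identities of Lemma \ref{boundary}, one extracts $\langle\nabla|A|^2,\mu\rangle\le 0$ along $K$. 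With the Neumann sign in hand, Lemma \ref{wmp} applied to $e^{-\lambda t}\Phi$ for a suitable $\lambda>0$ delivers the estimate \eqref{hess-est} on all of $\bar\Omega\times[0,T]$.
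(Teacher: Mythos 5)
Your boundary analysis is built on a misreading of the cut-off. The paper's $\xi=\varrho^3$ with $\varrho=(C_R-\chi(r))_+$ is compactly supported inside the geodesic ball $B_R(o)\subset\bar\Omega$, so $\xi$, $\nabla\xi$, and hence $\Phi=\xi|A|^2$ and $\nabla\Phi$ all vanish identically on $\partial\bar\Omega$. There is nothing to check: the Neumann condition $\langle\nabla h,\mu\rangle=0$ holds trivially, and this is precisely why the cut-off was introduced. Your proposed derivation of a sign for $\langle\nabla|A|^2,\mu\rangle$ via \eqref{IIK}, Codazzi, and the convexity of $K$ is therefore unnecessary; it is also unestablished as stated (obtaining a clean sign on the full $|\nabla A|^2$ or $|A|^2$ flux along a Neumann boundary is a delicate point that in \cite{L:14} requires a separate argument, and nothing in Lemma \ref{boundary} yields it directly). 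Meeting this "main technical obstacle" head-on signals that the role of the cut-off was not recognized.

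The interior argument also diverges in a way that does not reach the stated conclusion. The paper does not apply Lemma \ref{wmp} to $e^{-\lambda t}\Phi$; it instead multiplies by $f(t)=C_4+C_5 t$, sets $h=f|A|^2\xi$, rules out the two small branches by first \emph{assuming} $|A|\ge\max\{|\mathcal{L}|^{1/3},|\bar C|^{1/2}\}$ (absorbing the lower-order curvature terms into $|A|^4$), and arrives at a Riccati-type inequality $Qh+\frac{2}{\xi}\langle\nabla\xi,\nabla h\rangle\le C_6 h^2$. The nonlinear right-hand side is then handled by the ODE comparison Lemma \ref{smp} with $\gamma(t)=-1/(C_6 t + C_7)$, which is where the constants $C_4,\dots,C_7$ and the explicit third branch $\frac{C_4-T}{C_5(C_6T+C_7)}$ come from. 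An exponential damping $e^{-\lambda t}$ plus Lemma \ref{wmp} does not tame a genuinely quadratic source term $h^2$, and your schematic inequality $|A|^4\le |\mathcal{L}||A|+|\bar C||A|^2+\mathcal{E}(T)|A|^2$ followed by "whichever term dominates" does not produce the quantity on the right-hand side of \eqref{hess-est}. To recover the statement you need both the affine-in-$t$ weight $f(t)$ and the ODE comparison lemma; neither appears in your proposal.
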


	\begin{proof} Let $f(t) = C_4+C_5 t$ for some positive constants $C_4, C_5$ to be fixed later. Hence, denoting $h = f|A|^2 \xi$ one has
		\begin{align*}
			&  Qh + 2\frac{1}{\xi}\langle \nabla \xi, \nabla h\rangle = f|A|^2 Q\xi + \xi Q(f|A|^2)  + 2 h\frac{|\nabla \xi|^2}{\xi^2} \\
			& \,\, \le  f|A|^2 Q\xi + \xi f'(t) |A|^2 +2 \xi f(t) (- |\nabla A|^2+|A|^4+|\mathcal{L}| |A| + |\bar C||A|^2) + 2h\frac{|\nabla \xi|^2}{\xi^2},
		\end{align*}
	with $|\mathcal{L}|=\sup _{\bar \Omega \times [0,T]}\Big ( |T_{ij}+\frac{1}{\sqrt{n}}\langle \bar R(\partial _ i, \nu ) \nu ,\partial _ j\rangle | \Big )$ and $|\bar C|=\cdots $.

		Note that 
		\[
		|\nabla \xi|^2 = ||D\xi||_{M_s}^2 + \langle\bar\nabla\xi, \nu\rangle^2 \le ||D\xi||_{M_s}^2 + ||D\xi||_{M_s}^2 \Big(\Big(\frac{\Theta}{\rho}\Big)^2-1\Big) \le ||D\xi||_{M_s}^2 \mathcal{M}^2.
		\]
		Suppose that
		\begin{equation}\label{assumption}
		|A| \ge \max\{|\mathcal{L}|^{\frac{1}{3}}, |\bar C| ^{\frac{1}{2}}\}.
		\end{equation}
		Otherwise we are done. Hence 
		\begin{align}\label{heat-h} 
			Qh + 2\frac{1}{\xi}\langle \nabla \xi, \nabla h\rangle & \le  \frac{1}{\xi}\Big(2C_- ||D\xi||_{M_s} + 2\frac{1}{\xi} ||D\xi||_{M_s}^2+ ||DD\xi||_{M_s}\Big) \mathcal{N} h \\
			&\,\,\,\,\,\,\, + \frac{f'(t)}{f(t)} h 
			+ \frac{6}{f\xi} h^2. \nonumber 
		\end{align}
		Hence, $D\xi = \xi'(r) Dr$ and $DD\xi = \xi'(r) DDr + \xi '' (r) Dr\otimes Dr$. We have
		\[
		\xi'(r) = -3\varrho ^2(r) \chi'(r) \quad \mbox{ and } \quad \xi '' (r) = 6\varrho(r) \chi'^2(r) -3\varrho^2(r) \chi '' (r).
		\]
		In this case \eqref{comp-thm} allow us to call the Hessian comparison theorem [Theorem 1.4, \ref{AMR:16}] to achieve
		\begin{eqnarray*}
			DDr\leq \frac{\chi '(r)}{\chi (r)}\Big ( \langle \, , \, \rangle -{\rm d}r\otimes  {\rm d}r \Big ).
		\end{eqnarray*}
		Consequently
		\begin{align*}
			\frac{1}{\xi}\Big\langle D_{\frac{\partial}{\partial x^i}} D\xi, \frac{\partial}{\partial x^j} \Big\rangle {\sf v}^i {\sf v}^j  \le -3\frac{\varrho^2(r)}{\varrho^3(r)} \frac{\chi'^2(r)}{\chi(r)} (\langle {\sf v}, {\sf v}\rangle-\langle Dr, {\sf v}\rangle^2)  +\bigg( 6 \frac{\chi'^2(r)}{\varrho^2(r)} -3\frac{\chi'' (r)}{\varrho(r)}\bigg)  \langle Dr, {\sf v}\rangle^2
		\end{align*}
		for any tangent vector ${\sf v}$ in $M_s$.
		Therefore  $C_R= 2 \chi(R)\ge 2\chi (r)$ in $\operatorname{supp} \xi$ implies that
		\begin{align*}
			\frac{1}{\xi}\Big\langle D_{\frac{\partial}{\partial x^i}} D\xi, \frac{\partial}{\partial x^j} \Big\rangle {\sf v}^i {\sf v}^j & \le 3 \frac{\chi'^2(r)}{C_R^2} \langle Dr, {\sf v}\rangle^2 +\bigg( 6 \frac{\chi'^2(r)}{\chi^2(R)} -3\frac{\chi'' (r)}{C_R}\bigg)  \langle Dr, {\sf v}\rangle^2 \\
			& \le   C\big({\max}_{\bar\Omega} \chi, {\max}_{\bar\Omega} \chi ',   {\max}_{\bar\Omega} \chi '' \big).
		\end{align*}
		Similarly, since $D\xi = \xi'(r) Dr$ one gets
		\begin{align*}
			\frac{1}{\xi} ||D\xi||_{M_s}  = 3 \frac{\chi'(r)}{\varrho(r)}  \le   C\big({\max}_{\bar\Omega} \chi, {\max}_{\bar\Omega} \chi ' \big).
		\end{align*}
		Coming back to \eqref{heat-h} we conclude
		\begin{align}\label{heat-h2}
			Qh + 2\frac{1}{\xi}\langle \nabla \xi, \nabla h\rangle\le  C\big(\chi(R), {\max}_{\bar\Omega} \chi ',   {\max}_{\bar\Omega} \chi '' \big)\mathcal{N} h + \frac{f'(t)}{f(t)} h 
			+ \frac{6}{f\xi}  h^2.
		\end{align}
		Chose $C_4, \,C_5>0$ with $C_4>T$ such that 
		\[
		\frac{C_4}{C_5^ 2}\le \min _{\bar \Omega \times [0, T]}|A|^ 2\xi .
		\]
		In this way the equation \eqref{heat-h2} may be estimated 
		\begin{align*}
			Qh + 2\frac{1}{\xi}\langle \nabla \xi, \nabla h\rangle\le C_6 \big(\chi(R), {\max}_{\bar\Omega} \chi ',   {\max}_{\bar\Omega} \chi '' , \mathcal{N}, R\big )h^ 2.
		\end{align*}
		Since $h$ is supported in $\bar \Omega$ we have $\langle \nabla h, \mu\rangle =0$. Thus the Lemma \ref{smp} provides that 
		\begin{equation}\label{gamma}
		h\le  \gamma(t), \,\, {\rm in} \, \bar \Omega \times [0, T],
		\end{equation}
		where $\gamma$ is a solution of
		\[
		\frac{d\gamma}{\gamma^2} = C_6 dt
		\]
		with $\gamma (0)\geq h (x, 0)$  for all $x\in \bar \Omega $.
		A possible choice for $\gamma $ is 
		\[
		\gamma (t)=-\frac{1}{C_6t+C_7},
		\]
		with $C_7<\min \{-\frac{1}{\max _{\Sigma _0}h }, -C_6T \}.$ Therefore from \eqref{gamma} we get
		\[
		|A|^ 2\le \frac{C_4-T}{C_5(C_6T+C_7)}.
		\]
		This finishes the proof. \end{proof}

	\section{Proof of Theorem \ref{theo2}}\label{proof}
	
	The spatial gradient estimate \eqref{est-conv} in Proposition \ref{est-grad-2} implies that the evolving graphs $\Sigma_t$, $t\in [0, T)$ are strictly spacelike. Moreover, implies that the partial differential equation in \eqref{F2} is uniformly parabolic in $\bar\Omega \times [0, T)$. These  $C^1$ bounds combined with the $C^0$ estimates \eqref{height-1} and \eqref{est-rho} permit to apply the standard theory of quasilinear parabolic PDEs as detailed in [Chapter XIII, \ref{L:96}]. This theory establishes the existence of a smooth solution $u$ of \eqref{F2} defined in $\bar\Omega \times [0, T)$ for an arbitrary $T>0$. 
	
	Now, using Proposition \ref{hess-est-prop} and proceeding as in [\ref{L:14}, Lemma 8.4] one obtains higher order estimates for $u$ (in terms of bounds for higher covariant derivatives of $A$). In particular, since bounds for $H$ depend on estimates for $A$ and its derivatives one gets from \eqref{aga} that
	\[
	\Big|\frac{\partial u}{\partial t}\Big| = \frac{W}{\rho} |H| \le |H|. 
	\]
	Hence, we conclude that the higher order estimates for spatial derivatives of $u$ give bounds for the time derivatives of $u$. These higher estimates imply that there exists a smooth limit of $u$ as $t\to T$. Then the function $u$ can be extended past time $T$ to provide a solution of \eqref{F2} defined in $\bar\Omega \times [0, +\infty)$.
	
	Let us study the asymptotic behavior of the flow. Let	$s_0=\min _{\Sigma_0} s+\epsilon $ with $\epsilon >0$ and $s_1=\max_{\Sigma _0} s+\epsilon$.
	Recall that the mean curvature of a slice $\{s\}\times M$ is given by $H=n\frac{\rho '(s)}{\rho (s)}.$ Hence $\varphi _i: \bar\Omega \times [0, \infty) \to N$ given by
	\begin{eqnarray}\label{mcf-slices}
	\frac{\partial \varphi _i}{\partial t}=n\frac{\rho '}{\rho }\nu , \,\,\,\, i=0, 1
	\end{eqnarray}
	defines a mean curvature flow with initial data $\{s_0\}\times \bar \Omega $ and $\{s_1\}\times \bar \Omega $, respectively. We will denote by $\underline{s}= \pi_\mathbb{R}\circ \varphi _0$ and $\overline{s}= \pi_\mathbb{R}\circ \varphi _1$ the height functions of the flows defined above.
	We have from \eqref{ds} that 
	\begin{eqnarray}\label{s}
	\dfrac{d\overline{ s}}{dt}=n\dfrac{\rho '(\overline{ s})}{\rho (\overline{ s})} \,\,\,\, {\rm and}\,\,\,\,\, \dfrac{d\underline{s}}{dt}=n\dfrac{\rho (\underline{s}) '}{\rho (\underline{s})},
	\end{eqnarray}
	whence 
	\begin{eqnarray}\label{t}
	nt(\underline{s})=\displaystyle \int _{s_0}^ {\underline{s}}\dfrac{\rho (r) }{\rho '(r)}dr \,\,\,\,\,{\rm and}\,\,\,\,\, nt(\overline{s})=\displaystyle \int _{s_1}^ {\overline {s}}\dfrac{\rho (r) }{\rho '(r)}dr .
	\end{eqnarray}
	Let us consider the oscillation function $osc (t)=\overline{s}(t)-\underline{s}(t)\ge 0$ that gives the distance between the slices $\{s_0\}\times \bar \Omega $ and $\{s_1\}\times \bar \Omega  $ as they evolves by the mean curvature flows \eqref{mcf-slices}. Let $t_-, t_+\in [0, \infty )$ with $t_-<t_+$. From \eqref{t} we have that
	\[
	\displaystyle \int _{\overline{s}(t_-)}^ {\overline{s}(t_+)}\dfrac{\rho (r)}{\rho '(r)}dr=\displaystyle \int _{\underline {s}(t_-)}^ {\underline {s}(t_+)}\dfrac{\rho (r)}{\rho '(r)}dr.
	\]
	From the assumption \eqref{max-princ} we get that $\frac{\rho }{\rho '}$ is non-decreasing in $s$. In addition from \eqref{s} and hypothesis \eqref{max-princ} one can check that $\overline{ s}$ and $\underline{s}$ are increasing, hence from above the interval $[\overline{ s}(t_-), \overline{ s}(t_+) ]$ cannot have a length bigger than the interval $[\underline { s}(t_-), \underline { s}(t_+) ]$. Therefore 
	\[
	osc(t_+)=\overline{ s}(t_+)-\underline{ s}(t_+)\le \overline{ s}(t_-)-\underline{ s}(t_-)=osc(t_-),
	\] 
	and $osc$ is non-increasing. In addition, looking to \eqref{t} one checks that the hypothesis \eqref{max-princ} disable that $osc $ to be constant, unless $osc $ is zero. 
	
	
	Now we recall the avoidance principle, which asserts that if two manifolds are initially disjoint and at least one of them is compact, then they remain so during the flow (see \cite{B:16}). 
	Consider $osc^\Sigma (t)=\max _{\Sigma _t}s-\min _{\Sigma _t}s$ the oscillation of a solution of \eqref{Fn}.
	Comparing $\{s_0\}\times \bar \Omega $, $\Sigma _ t $ and $\{s_1\}\times \bar \Omega $ and taking the avoidance principle into account, we accomplish that $osc^ \Sigma $ must be non-increasing. In addition, since $osc $ is non-constant, unless it is zero, then so is $osc ^ \Sigma $.
	
	
	Let $c:=\lim _{t\to \infty }osc^\Sigma (t)$. Define $\psi _ i: \bar \Omega \times [i, \infty )\to N$, $i\in \mathbb{N}$ by $\psi _i(t, x)=\varphi (t+i, x)$. The equation \eqref{matM} provides that the $C^ 1$ estimate \eqref{est-conv} holds true for $\psi _ i$ as well. From [\ref{L:96}, Theorem 12.1] we obtain uniform Holder estimates for the flows $\psi _ i$ and from Schauder theory ref.....Arzela-Ascoli... we obtain the smooth limit mean curvature flow 	
	\[
	\varphi _\infty =\lim _{i\to \infty }\psi _ i.
	\]
	By construction, the flow $\varphi _\infty $ has constant oscillation equal to $c$ and, up to the initial data, $\varphi _ \infty $ solves \eqref{Fn} as well. As we saw above, we must have $c=0$.
	
	Recall that the metric associated to a solution of \eqref{Fn} is given by 
	\begin{center}
		$\varphi (\cdot, t)^ * \langle \,, \, \rangle $	with $\langle \, ,\, \rangle =	- \pi_\mathbb{R}^ * {\rm d}s^2 + \rho^2(\pi _\mathbb{R}) \pi _M^ * \sigma$,
	\end{center}
	in each time $t$. Since $\max _{\Sigma _t}s-\min _{\Sigma _t}s=osc^ \Sigma (t)\to 0$ as $t\to \infty$,
	then
	\[
	\lim _{t\to \infty }\varphi (\cdot, t)^ *\langle \, ,\, \rangle =\rho (\pi _\mathbb{R})^ 2\pi _M^* \sigma ,
	\]
	in which case it is conformal to the metric $\sigma $ of the leaf $M$.

	Now let us prove that the mean convexity is preserved. Taking the equation \eqref{riccii} into account we check that $\overline{\rm Ric }(\partial _s, \partial _s)=-n\frac{\rho ''}{\rho }$. In this case the boundedness over $\overline{\rm Ric }(\partial _s, \partial _s)$ is translated in the condition \eqref{curv-hip}, and we conclude from the Proposition \ref{mean-convex} that if the initial graph $\Sigma _ 0$ is mean convex, then the evolving graphs remain mean convex.

	\section{Appendix: A Simons' type identity}
	
	\begin{proof}{\it of Proposition \ref{simons}}
		Let $\overline{ R}$ and $ R$ be the Riemann tensors of $N$ and $\Sigma _t$ defined by:
		\[
		\overline{ R}(X, Y)Z=\overline{\nabla }_X\overline{\nabla }_YZ-\overline{\nabla }_Y\overline{\nabla }_XZ-\overline{\nabla }_{[X, Y]}Z,
		\]
		for all $X, Y, Z\in TN$ and
		\[
		R(U, V)W=\nabla _U\nabla _VW-\nabla _V\nabla _UW-\nabla _{[U, V]}W,
		\]
		for all $U, V, W\in T\Sigma _t$.
		
		Set the coordinate vector fields $\{\partial_i\}_{i=1}^{n+1}$ in $N$. In what follows we use Codazzi's equation
		\begin{align}
			\label{codazzi-cod-1}  \nabla_i a_{jk} -\nabla_j a_{ik}&= \partial_i \langle \bar\nabla_{\partial_j} \nu, \partial_k\rangle - \partial_j \langle \bar\nabla_{\partial_i}\nu, \partial_k\rangle - \langle A(\nabla_{\partial_i}\partial_j), \partial_k\rangle - \langle A\partial_j, \nabla_{\partial_i}\partial_k\rangle\nonumber \\
			&\,\,\,\,\,\,\,\,+ \langle A(\nabla_{\partial_j}\partial_i), \partial_k\rangle +\langle A\partial_i, \nabla_{\partial_j}\partial_k\rangle =- L_{ijk},.
		\end{align}
		being $L$ the $(0,3)$-tensor in the hypersurface $\Sigma _t$ given by
		\begin{equation}
		\label{L} L_{ijk}=\langle \bar R(\partial_i,\partial_j)\nu ,
		\partial_k\rangle.
		\end{equation}
		We also have need of (Riemannian) Ricci commutation formula
		\begin{align}
			\label{ricci-commutation} & \nabla_i \nabla_k
			a_{j\ell}-\nabla_k\nabla_i a_{j\ell}= 
			-R^s_{ikj}
			a_{s\ell}-R^s_{ik\ell} a_{js}.
		\end{align}
		Using (\ref{codazzi-cod-1}) and (\ref{ricci-commutation}) one
		obtains
		\begin{eqnarray*}
			\nabla_k \nabla_\ell a_{ij} &=& \nabla_k \nabla_i a_{\ell j}
			-\nabla_k L_{\ell ij}\\
			&=&\nabla_i \nabla_k a_{\ell j} - R^s_{ki\ell} a_{sj}-
			R^s_{kij} a_{\ell s} -\nabla_k L_{\ell ij}\\
			&=& \nabla_i \nabla_j a_{k\ell } -\nabla_i
			L_{k\ell j}-\nabla_k L_{\ell ij}- R^s_{ki\ell} a_{sj}-R^s_{kij}
			a_{\ell s}.
		\end{eqnarray*}
		Now we should take in account the Gauss equation
		\begin{equation*}
			R^s_{ik\ell} = \bar R_{ik\ell}^s  +a_{k\ell} a_i^s - a_{i\ell}
			a^s_k.
		\end{equation*}
		Therefore
		\begin{eqnarray*}
			\nabla_k \nabla_\ell a_{ij}&=&\nabla_i \nabla_j a_{k\ell }
			-\nabla_i L_{k\ell j}-\nabla_k L_{\ell ij}\\
			& &- (\bar R^s_{ki\ell}+a_{i\ell} a^s_k - a_{k\ell} a^s_i)
			a_{sj}- (\bar R^s_{kij} +a_{ij}a^s_k -a_{jk}a^s_i) a_{\ell s}.
		\end{eqnarray*}
		Taking traces we have
		\begin{eqnarray*}
			\Delta a_{ij} &=&\nabla_i \nabla_j H
			-g^{k\ell}(\nabla_i L_{k\ell j}+\nabla_k L_{\ell ij}) - g^{k\ell}(\bar R^s_{ki\ell}a_{sj}+\bar R^s_{kij}a_{\ell s})\\
			& &- a^k_{i} a^s_k a_{sj}+H a^s_i a_{sj}- a_{ij}|A|^2
			+a^\ell_{j}a^s_i a_{\ell s}.
		\end{eqnarray*}
		However
		\[
		a^\ell_{j}a^s_i a_{\ell s}=a^s_{j}a^\ell_i a_{s \ell}=a^s_{j}a^k_i
		a_{s k}= a^k_i a_{sj} a^s_{k}.
		\]
		This implies the following general formula
		\begin{eqnarray*}
			\Delta a_{ij} &=&\nabla_i \nabla_j H +H a^s_i a_{sj}-
			a_{ij}|A|^2\nonumber\\
			& & -g^{k\ell}(\nabla_i L_{k\ell j}+\nabla_k L_{\ell ij}) -
			g^{k\ell}(\bar R^s_{ki\ell}a_{sj}+\bar R^s_{kij}a_{\ell s}).
			\label{delta-aij-general}
		\end{eqnarray*}
		Hence,
		\begin{eqnarray*}
			\frac{1}{2} \Delta |A|^2 -|\nabla A|^2  & =& a^{ij}\Delta a_{ij}\\
			& =&a^{ij}\nabla_i \nabla_j H +H \operatorname{tr} A^3-
			|A|^4\nonumber -g^{k\ell}(\nabla_i L_{k\ell j}+\nabla_k L_{\ell ij}) a^{ij} \\
			& &-g^{k\ell}(\bar R^s_{ki\ell}a_{sj}a^{ij}+\bar R^s_{kij}a_{\ell s} a^{ij}).
		\end{eqnarray*}
		On the other hand
		\begin{align*}
			\partial_t a_{ij}& = \langle \bar\nabla_{\partial_t} \bar\nabla_{\partial_i} \nu, \partial_j\rangle + \langle\bar\nabla_{\partial_i} \nu, \bar\nabla_{\partial_j} \partial_t\rangle  \\
			& = \langle \bar\nabla_{\partial_i} \bar\nabla_{\partial_t} \nu, \partial_j\rangle + \langle \bar R(\partial_t, \partial_i)\nu, \partial_j\rangle + \langle\bar\nabla_{\partial_i} \nu, \bar\nabla_{\partial_j} \partial_t\rangle \\
			&= \langle \nabla_{\partial_i} \nabla H, \partial_j\rangle  - H\langle \bar R(\partial_i,\nu)\nu, \partial_j\rangle + \langle\bar\nabla_{\partial_i} \nu, \bar\nabla_{\partial_j} (H\nu)\rangle \\
			& = \langle \nabla_{\partial_i} \nabla H, \partial_j\rangle  - H\langle \bar R(\partial_i,\nu)\nu, \partial_j\rangle + H a_{i}^k a_{jk}.
		\end{align*}
		Therefore
		\begin{align*}
			\frac{1}{2}\partial_t |A|^2  = a^{ij}\langle \nabla_{\partial_i} \nabla H, \partial_j\rangle  - Ha^{ij}\langle \bar R(\partial_i,\nu)\nu, \partial_j\rangle + H a_{i}^k a_{jk} a^{ij}.
		\end{align*}
		We conclude that
		\begin{eqnarray*}
			\frac{1}{2} Q |A|^2 +|\nabla A|^2  & =&
			|A|^4+g^{k\ell}(\nabla_i L_{k\ell j}+\nabla_k L_{\ell ij}) a^{ij} +
			g^{k\ell}(\bar R^s_{ki\ell}a_{sj}a^{ij}+\bar R^s_{kij}a_{\ell s} a^{ij})\\
			& & -H\langle \overline{ R}(\partial _ i, \nu )\nu , \partial _ j\rangle .
		\end{eqnarray*}
		Therefore
		\begin{eqnarray*}
			\frac{1}{2} Q|A|^2 +|\nabla A|^2  & = & |A|^4+T_{ij} a^{ij} +
			g^{k\ell}(\bar R^s_{ki\ell}a_{sj}a^{ij}+\bar R^s_{kij}a_{\ell s} a^{ij}) -H\langle \overline{ R}(\partial _ i, \nu )\nu , \partial _ j\rangle ,
		\end{eqnarray*}
		where $T_{ij}=g^{k\ell}(\nabla_i L_{k\ell j}+\nabla_k L_{\ell ij}) a^{ij}.$

	\end{proof}

	\end{document}